\documentclass[smallextended]{svjour3}       
%
\smartqed  
\usepackage{graphicx}
\usepackage[english]{babel}
\usepackage{graphics}
\usepackage{graphicx}
\usepackage{amssymb}
\usepackage{arydshln}
\usepackage{amsmath}
\usepackage{amsfonts}
\usepackage{float}
\usepackage{multirow}
\usepackage{booktabs}
\usepackage{pstricks,pst-plot}
\usepackage{hyperref}
\usepackage{epstopdf}
\usepackage{subfigure}
\usepackage{dsfont}
\usepackage{caption}
\usepackage{enumerate}

\hyphenation{poly-no-mi-al}
\hyphenation{poly-no-mi-als}
\hyphenation{un-known}

\newcommand{\be}{\begin{equation}}
\newcommand{\ee}{\end{equation}}
\newcommand{\ba}{\begin{array}}
\newcommand{\ea}{\end{array}}

\newcommand{\comment}[1]{}

 \newcommand{\B}[1]{\mbox{\boldmath $#1$}}

\newcommand{\rank}{\mbox{\rm {rank }}}




\newcommand{\Rc}{\raisebox{-0.11cm}[0.1cm][0.0cm]{\mbox{$\Rsh$}}}
\newcommand{\rshno}{\rotatebox[origin=c]{180}{\reflectbox{\Rc}}}
\newcommand{\rc}{\raisebox{0.11cm}[0.1cm][0.0cm]{\mbox{\rshno}}}

\newcommand{\ca}{\hspace{-8.5pt}
\makebox[0.3cm]{\raisebox{-0.2cm}[0.1cm][0.1cm]{{${\hookrightarrow}$}}}}




\newcommand{\stb}{\curvearrowright}
\newcommand{\stbt}[1]{\mbox{$\curvearrowright$\hspace{-8pt}\raisebox{0.22cm}[0cm][0cm]{$\scriptstyle #1$}\hspace{4pt}}}

\newcommand{\sltt}[1]{\mbox{\reflectbox{\rotatebox[origin=t]{180}{$\curvearrowleft$}}\hspace{-8pt}\raisebox{-0.1cm}[0cm][0cm]{$\scriptstyle #1$}\hspace{4pt}}}
\newcommand{\slb}{\reflectbox{$\curvearrowright$}}
\newcommand{\slbt}[1]{\mbox{\reflectbox{$\curvearrowright$}\hspace{-6pt}\raisebox{0.22cm}[0cm][0cm]{$\scriptstyle #1$}\hspace{4pt}}}
\newsavebox{\combarrow}
\savebox{\combarrow}[0.4cm]
{
\Rc\hspace{-8.5pt}
 \raisebox{-0.4cm}{\rc}
}


\title{Fast QR iterations for unitary plus low rank matrices}

\author{R. Bevilacqua
\and G. M. Del Corso\and L.~Gemignani\thanks{The research of the last two authors was partially supported by GNCS project ``Analisi di matrici sparse e data-sparse: metodi numerici ed applicazioni''and by the project sponsored by  University of Pisa
	under the grant PRA-2017-05.} }

\institute{R. Bevilacqua, G. M. Del Corso L. Gemignani\at
	Dipartimento di Informatica, Universit\`a di Pisa,
	Largo Bruno Pontecorvo 3, 56127 Pisa, Italy,
	\email{\{roberto.bevilacqua,gianna.delcorso,luca.gemignani\}@unipi.it}           
}

\begin{document}

\maketitle

%

\begin{abstract}
	Some  fast  algorithms for  computing  the eigenvalues
	of a  (block) companion matrix  have  recently appeared in the literature.
	In this paper we  generalize the approach to encompass unitary plus low rank
        matrices of the form $A=U + XY^H$ where $U$ is
	a general unitary matrix. Three important cases for applications are $U$ unitary diagonal,
        $U$ unitary block Hessenberg and $U$ unitary in  block  CMV form.   Our extension exploits
        the properties of a larger  matrix $\hat A$ obtained by a certain embedding of the Hessenberg reduction of $A$
        suitable to maintain its structural properties.  We show that $\hat A$ can be factored as 
	product of  lower and upper  unitary Hessenberg matrices  possibly perturbed in the first $k$ rows, and, moreover, 
         such a data-sparse  representation is well suited  for  the design of fast eigensolvers based on the
	QR  iteration. The resulting algorithm is  fast  and backward stable.

	\noindent {\it AMS classification:} 65F15
\end{abstract}

\keywords{
	Unitary matrix, low-rank modification, rank structure, QR eigenvalue algorithm, complexity.
}

\section{Introduction}
The design  of specialized algorithms that compute the eigen\-va\-lues of unitary matrices is  so far a classical topic
in structured numerical linear algebra
(compare  \cite{BE} and the references given therein).  The major applications that stimulate research in this
area lie  in signal processing \cite{AGR},  in time series analysis \cite{ACR},  in Gaussian quadrature on the unit circle
\cite{GWB} and
in trigonometric approxi\-ma\-tion theory \cite{FH}.  In the last years many authors have dealt with the  issue  of efficiently
computing the eigenvalues of unitary matrices perturbed by low rank corrections (see the books \cite{Yuli_book,Raf_book}
for general overviews  of these   developments).  Motivations come from (matrix) polynomial  root-finding  problems 
\cite{ACR_0,Bindel2005AFA,BDG,BGP_NM,Ge05} and generally
from   eigenvalue problems associated with finite truncations of large (block) unitary matrices arising in the aforementioned
applications  \cite{SVA} as well as in certain statistical methods for the analysis of complex systems \cite{FS03}.
Typically in these applications large unitary matrices  are represented in condensed form using the  (block) Hessenberg
\cite{Gragg}
or the (block)  CMV shape \cite{AGR86,Ki85}.

The papers  \cite{BEGG_simax,CG}  presented the first fast and numerically reliable 
eigensolvers for certain low rank perturbations of unitary matrices, while in~\cite{VDC10} the analogous case of low rank perturbation of Hermitian structure is addressed.
Since then two challenging issues have attracted much  work: 1) the search of numerical algorithms  that are  computationally
efficient with respect to the size  both of the matrix  and  of  the perturbation  and 2)  a formal proof of the backward stability of these algorithms. 
Very recently numerical methods which combine all these two  features  have been proposed  in \cite{AMVW15,AMRVW17} for
computing the eigenvalues of companion and block companion matrices, respectively.  These methods incorporate
some techniques that are specifically adjusted to exploit the properties of companion and block companion forms.   In particular,
the Hessenberg reduction of a block companion matrix is found by  relying upon the decomposition of the matrix
as product of scalar companion matrices  which provides  the  factored representation of the Hessenberg reduction
to be used in the QR iterative process.


In this paper we generalize the approach  pursued in \cite{AMVW15,AMRVW17}
to deal with
 input matrices   of the form  $A_i=U + XY^H\in \mathbb C^{n\times n}$ where
$U$ is  a general $n\times n$ unitary  matrix and $X, Y \in \mathbb{C}^{n \times k}$ with $k\leq n$. Eigenvalue computation is
customarily a two-step process.  Firstly the  input matrix $A_i$ is reduced in Hessenberg form by  unitary similarity, that is,
$A_i\rightarrow A_f\colon=QA_iQ^H$, where    the  final $A_f$ is Hessenberg and $Q$ unitary, and
then the QR iteration is applied to the
Hessenberg reduction $A_f$  for  computing its Schur form.  Each  iterate generated  by a  fast adaptation of the QR
scheme   inherits xthe  condensed representation of the   initial matrix that is the
matrix $A_f$  obtained  at the end of the Hessenberg reduction.  By setting $U\colon=QUQ^H$, $X\colon=QX$ and $Y=QY$
it is found that this matrix $A_f=U + XY^H$ is still unitary plus low rank in Hessenberg form.

The efficient computation
of a condensed representation of such matrix is the subject of the  papers \cite{GR,BDCG_TR}  where it is shown that
$A_i$ can be   embedded into  a larger matrix  $\hat A_i$ which is
converted by unitary similarity in the Hessenberg  matrix  $\hat A_f$ specified in factored form as the product of
three  factors, that is $\hat A_f=L\cdot F\cdot R$ with  $L$ and $R$ unitary  $k$-Hessenberg matrices  and
$F$ unitary upper Hessenberg matrix perturbed
in the first $k$
rows. The construction greatly simplifies  when the  matrix $A_i$ is a unitary (block) Hessenberg or CMV matrix
modified in the first/last rows/columns since the three factors $L, F$ and $R$ can be directly obtained simplifying the initial transformation $A_i\to A_f$ as explained in~\cite{BDCG_TR}. In particular, this is the case of block companion matrices, for which we can compute the factored Hessenberg representation with $O(n^2k)$ operations. 

Our present work   aims  at designing a fast  version of the implicit QR eigenvalue method~\cite{Francis} for unitary plus low rank  Hessenberg
matrices
 $\hat A=\hat A_f=U + XY^H$, $U\in \mathbb{C}^{n \times n}$ unitary   and $X, Y \in \mathbb{C}^{n \times k}$,
represented  in compressed form  as $\hat A=L\cdot F\cdot R$, 
where $L$ is the product of $k$ unitary lower Hessenberg matrices, $R$
is the product of
$k$  unitary upper Hessenberg matrices and the middle factor $F$  is a unitary upper Hessenberg matrix perturbed
in the first $k$
rows. The representation  is data-sparse since it  involves $O(nk)$  data storage  consisting of
$O(k)$ vectors of length
$n$ and $O(nk)$  Givens rotations. Specifically, the main  results  are:
\begin{enumerate}
\item The development of a bulge-chasing technique for performing one step of the implicit QR algorithm applied to a matrix
  specified in the $LFR$ format by returning as output the updated factored  $LFR$ representation of the new iterate.
\item A careful  look   at  the structural properties of Hessenberg matrices given  in the $LFR$ format by implying that,
  under some auxiliary assumptions on the properness of the factors $L$ and $R$, the  middle  matrix $F$ is reducible iff the same
  holds for the  Hessenberg matrix.  It follows that the deflation in the Hessenberg iterate  can be revealed in the
middle factor  converging to an upper triangular matrix in the limit. 
\item A cost and error analysis of the resulting adaptation of the implicit QR algorithm.  We prove that 
one single QR iteration requires   $O(n k)$ ops only and it is  backward stable.
\end{enumerate}

The paper is organized as follows.   In Section~\ref{two} we recall some preliminary material about
the structural properties of possibly perturbed unitary matrices. Section~\ref{sec:rap} gives  the theoretical
foundations of our algorithm which is presented and analyzed  in  Section~\ref{sec:alg}.
In Section~\ref{sec:back} the backward stability of the algorithm is formally proved. Finally,  in Section~\ref{sec:numexp}
we show the results of numerical experiments followed by some conclusions and future work in Section~\ref{sec:conclusions}.
 
\section{Preliminaries}\label{two}

We  first recall some basic properties of unitary matrices which
play an important role in the derivation of our methods.
\begin{lemma}\label{l1}
Let $U$ be a unitary matrix of size $n$. Then 	
$$
\rank(U(\alpha, \beta))=\rank(U(J\backslash \alpha,J\backslash \beta))+|\alpha|+|\beta|-n
$$
where $J=\{1, 2, \ldots, n\}$ and $\alpha$ and $\beta$ are subsets of $J$.
If $\alpha=\{1, \ldots, h\}$ and $\beta=J\backslash \alpha$, then we have
$$
\rank(U(1:h, h+1:n))=\rank(U(h+1:n, 1:h)),\quad  \mbox{ for all } h=1, \ldots, n-1.
$$

\end{lemma}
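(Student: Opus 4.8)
The plan is to establish the first identity by a dimension count on subspaces of $\mathbb C^n$, after which the second assertion is an immediate specialization. Since the rank of a submatrix is invariant under permutations of its rows and columns, I may reorder indices so that $\alpha=\{1,\dots,p\}$ and $\beta=\{1,\dots,q\}$ with $p=|\alpha|$, $q=|\beta|$, and partition $U$ conformally into the four blocks $U(\alpha,\beta)$, $U(\alpha,J\backslash\beta)$, $U(J\backslash\alpha,\beta)$, $U(J\backslash\alpha,J\backslash\beta)$. The decisive structural input is that the columns of $U$ are orthonormal: if $V\subseteq\mathbb C^n$ denotes the span of the columns indexed by $\beta$, then $\dim V=q$ and its orthogonal complement $V^\perp$ is precisely the span of the columns indexed by $J\backslash\beta$, so $\dim V^\perp=n-q$. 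This is the only place where unitarity is used, and it is what couples the two complementary submatrices.

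First I would rewrite each rank as the dimension of a projected subspace. Let $P_\alpha$ be the coordinate projection onto the entries indexed by $\alpha$ and put $W=\ker P_\alpha=\operatorname{span}\{e_j:j\in J\backslash\alpha\}$, so $\dim W=n-p$. Because $U(J,\beta)$ has full column rank $q$, one has $\rank(U(\alpha,\beta))=\dim(P_\alpha V)=q-\dim(V\cap W)$. Applying the identical observation to the complementary block, now with $V^\perp$ in place of $V$ and with $W^\perp=\operatorname{span}\{e_j:j\in\alpha\}=\ker P_{J\backslash\alpha}$, yields $\rank(U(J\backslash\alpha,J\backslash\beta))=(n-q)-\dim(V^\perp\cap W^\perp)$.

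The two intersection dimensions are then tied together by the Grassmann identity $\dim(V\cap W)=\dim V+\dim W-n+\dim(V^\perp\cap W^\perp)$, which follows from $\dim(V+W)=n-\dim((V+W)^\perp)$ together with $(V+W)^\perp=V^\perp\cap W^\perp$. Substituting $\dim V=q$, $\dim W=n-p$ and the expression for $\dim(V^\perp\cap W^\perp)$ from the previous step gives $\dim(V\cap W)=n-p-\rank(U(J\backslash\alpha,J\backslash\beta))$, and feeding this into $\rank(U(\alpha,\beta))=q-\dim(V\cap W)$ produces exactly $\rank(U(\alpha,\beta))=\rank(U(J\backslash\alpha,J\backslash\beta))+|\alpha|+|\beta|-n$. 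I expect the only delicate point to be the bookkeeping of which coordinate subspace is the kernel of which projection and matching it to the correct block; once the identification of $V^\perp$ as the span of the columns indexed by $J\backslash\beta$ is in hand, the rest is forced. Equivalently, this identity is the nullity theorem for the inverse applied to $U^{-1}=U^H$, since conjugate transposition preserves rank; I would favour the self-contained subspace argument above.

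For the second statement I would simply take $\alpha=\{1,\dots,h\}$ and $\beta=J\backslash\alpha=\{h+1,\dots,n\}$, so that $|\alpha|+|\beta|-n=h+(n-h)-n=0$, while $J\backslash\alpha=\{h+1,\dots,n\}$ and $J\backslash\beta=\{1,\dots,h\}$. The first identity then collapses to $\rank(U(1:h,h+1:n))=\rank(U(h+1:n,1:h))$ for every $h=1,\dots,n-1$, which is the claim.
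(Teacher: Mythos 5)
Your argument is correct, but it takes a genuinely different route from the paper, which disposes of the lemma in one line by citing the nullity theorem of Fiedler and Markham applied to $U^{-1}=U^H$ (submatrices of $U^H$ being conjugate transposes of the complementary submatrices of $U$, ranks are preserved). What you give instead is a self-contained dimension count: identifying $\rank(U(\alpha,\beta))$ with $\dim(P_\alpha V)=q-\dim(V\cap W)$, doing the same for the complementary block with $V^\perp$ and $W^\perp$, and coupling the two via $\dim(V+W)=n-\dim(V^\perp\cap W^\perp)$. The one place unitarity enters is, as you say, the identification of $V^\perp$ with the span of the columns indexed by $J\backslash\beta$; everything else is generic linear algebra. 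In effect you have reproved the nullity theorem in the unitary special case rather than invoking it, which makes the lemma independent of the reference at the cost of a page of bookkeeping; the paper's citation is shorter but opaque to a reader who does not know the nullity theorem. All the individual steps check out, including the rank--nullity step $\dim(P_\alpha V)=\dim V-\dim(V\cap\ker P_\alpha)$, which does require the full column rank of $U(J,\beta)$ that you explicitly note, and the specialization to $\alpha=\{1,\dots,h\}$, $\beta=J\backslash\alpha$ is immediate.
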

\begin{proof}
  This well known symmetry in the  rank-structure of unitary matrices
  follows by a straightforward application of the nullity theorem~\cite{FM}.\qed
\end{proof}

\begin{lemma} \label{gant}
Let $U$ be a unitary matrix of size $n$, and let $\alpha, \beta\subseteq\{1, 2, \ldots, n\}$, such that $|\alpha|=|\beta|$, then
$$
|\det(U(\alpha, \beta))|=|\det(U(J\backslash \alpha,J\backslash \beta))|.
$$	
	\end{lemma}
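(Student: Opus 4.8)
The plan is to derive this determinantal symmetry from the unitarity relation $U^H U = I_n$ together with a cofactor/minor identity. The statement asserts that the modulus of a square minor of a unitary matrix equals the modulus of the complementary minor taken from $U$ itself (not from $U^{-1}$), when $|\alpha|=|\beta|$. The cleanest route exploits the fact that for a unitary matrix $U$ one has $U^{-1}=U^H$, so that the minors of $U^{-1}$ are the conjugates of the corresponding minors of $U$ up to transposition of the index sets.

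First I would recall the classical Jacobi identity relating the minors of a matrix to the complementary minors of its inverse. For an invertible $U$ of size $n$ and index sets $\alpha,\beta$ with $|\alpha|=|\beta|$, Jacobi's theorem states that
$$
\det\bigl(U^{-1}(\beta,\alpha)\bigr)=\pm\,\frac{\det\bigl(U(J\backslash\alpha,\,J\backslash\beta)\bigr)}{\det(U)},
$$
where the sign is $(-1)^{\sigma(\alpha)+\sigma(\beta)}$ with $\sigma(\gamma)=\sum_{i\in\gamma}i$. This is the determinantal analogue of the nullity theorem already invoked in Lemma~\ref{l1}, and I would cite the same source~\cite{FM} for it.

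Next I would specialize to the unitary case. Since $U^{-1}=U^H$, the submatrix $U^{-1}(\beta,\alpha)$ equals $U^H(\beta,\alpha)=\overline{U(\alpha,\beta)}^{\,T}$, whence $\det\bigl(U^{-1}(\beta,\alpha)\bigr)=\overline{\det\bigl(U(\alpha,\beta)\bigr)}$. Taking absolute values in Jacobi's identity and using $|\det(U)|=1$ (because $U$ is unitary) makes the sign factor and the determinant of $U$ disappear, leaving
$$
\bigl|\det\bigl(U(\alpha,\beta)\bigr)\bigr|=\bigl|\det\bigl(U(J\backslash\alpha,\,J\backslash\beta)\bigr)\bigr|,
$$
which is exactly the claim.

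I expect the main obstacle to be purely bookkeeping rather than conceptual: one must verify that the index sets in Jacobi's identity are paired correctly (the complementary minor of $U^{-1}(\beta,\alpha)$ lives on rows $J\backslash\alpha$ and columns $J\backslash\beta$), and that the sign $(-1)^{\sigma(\alpha)+\sigma(\beta)}$ together with the unimodular factor $\det(U)$ are genuinely killed by taking moduli. Since we only claim equality of absolute values, all the delicate sign tracking is irrelevant, which is what makes the unitary hypothesis so convenient. An alternative, if one prefers to avoid quoting Jacobi's identity explicitly, would be to write $U$ in block form after permuting rows and columns so that $\alpha$ and $\beta$ index leading blocks, and then use the CS decomposition or a Schur-complement argument on $U^H U=I$ to relate the two complementary blocks directly; this is more self-contained but longer, so I would favor the Jacobi route for brevity.
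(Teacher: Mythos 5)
Your argument is correct and is exactly the classical proof underlying the paper's one-line justification, which simply cites Gantmacher (Property 2, p.~21 of~\cite{Ga60}): Jacobi's complementary-minor identity plus $U^{-1}=U^H$ and $|\det U|=1$ kills both the sign and the determinant factor once moduli are taken. The only nit is attributional: Jacobi's determinantal identity is not really what~\cite{FM} proves (that paper gives the rank-level nullity theorem), so you should cite a standard source such as Gantmacher or Horn--Johnson for it instead.
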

\begin{proof}
See Gantmacher~\cite{Ga60}, Property 2 on page 21.	\qed
\end{proof}

\begin{definition}
	A matrix $H$ is called {\em $k$-upper Hessenberg} if $h_{ij}=0$ when $i>j+k$.
	Similarly, $H$ is called  {\em $k$-lower Hessenberg} if $h_{ij}=0$ when $j>i+k$.   In addition, 	
	when  $H$ is {\em $k$-upper Hessenberg} ({\em $k$-lower Hessenberg}) and
        the outermost entries are non-zero, that is, $h_{j+k,j}\neq 0$ ($h_{j,j+k}\neq 0$), $1\leq j\leq n-k$,
         then  the matrix is called {\em proper}.	
\end{definition}

Note that for $k=1$, that is when the matrix is in
Hessenberg form, the notion of properness  coincides with that of  being unreduced.
Also,  a $k$-upper Hessenberg matrix $H\in \mathbb C^{n\times n}$ is proper iff $\det(H(k+1:n, 1:n-k))\neq 0$. Similarly a $k$-lower Hessenberg matrix $H$ is proper iff $\det(H(1:n-k, k+1:n))\neq 0$. For $k<0$ a $k$-Hessenberg matrix is actually a strictly triangular matrix with $-k$ vanishing diagonals.

It is well known~\cite{Watkins} that, given a  non-zero  $n$-vector $\B x$ we can build a zero creating matrix
from a product of $n-1$ Givens matrices ${\cal G}_1\cdots {\cal G}_{n-1}$, where ${\cal G}_i=I_{i-1}\oplus G_i \oplus I_{n-i-1}$
and $G_i$ is a $2\times 2$ complex Givens rotations  of the form $\left[ \begin{array}{cc} c &-s\\s& \bar c \end{array}\right]$
such that $|c|^2+s^2=1$, with $s\in \mathbb{R}, s\ge 0$. The subscript index $i$ indicates the active part of the matrix
${\cal G}_i$.
The descending sequence of Givens rotations $H={\cal G}_1\cdots {\cal G}_{n-1}$ turns out to be a unitary upper Hessenberg
matrix such that $H{\B x}=\alpha {\B e}_1$, and $|\alpha|=\| \B x\|_2$.
Note that  $H$ is proper if and only if all the Givens matrices appearing in its factorization are non trivial, i. e. $s\neq 0$~\cite{AMVW15}. Generalizing this result we obtain the following lemma.
\begin{lemma}  \label{prop-v}
Let $X\in \mathbb{C}^{m\times k}$, $k<m$,  be of full rank. Then
\begin{enumerate}
	\item there exist a unitary $k$-upper Hessenberg matrix $H$  and an upper triangular matrix $T\in \mathbb{C}^{m\times k},  T=\left[\begin{array}{c} T_k\\0\end{array}\right]$ with $T_k\in \mathbb{C}^{k\times k}$ nonsingular such that
\begin{equation} \label{vx}
HX=T.
\end{equation}
	\item 	The product of the outermost entries of $H$ is given by
	\begin{equation} \label{prod_h}
	\prod_{i=1}^{m-k} |h_{i+k, i}|=\frac{|\det(X(m-k+1:m, 1:k))|}{\prod_{i=1}^k \sigma_i(X)},
	\end{equation}
	where $\sigma_1(X), \sigma_2(X), \ldots, \sigma_k(X)$ are the singular values of $X$.
      \item  Let $s$ be the maximum index such that $\rank(X(s:m, :))=k$, then $h_{i+k, i}\neq 0$ for $i=1, \ldots, s-1$.
\end{enumerate}
\end{lemma}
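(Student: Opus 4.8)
The plan is to establish the three claims in order, taking the single‑vector zero‑creating construction recalled just before the lemma as the atomic building block. For part~1 I would build $H$ one column at a time. A descending sequence $\mathcal G_1\cdots\mathcal G_{m-1}$ maps the first column of $X$ to a multiple of $\mathbf e_1$, producing a unitary upper Hessenberg factor $H^{(1)}$; since the first column of $H^{(1)}X$ is then supported only on row~$1$, a descending sequence acting on rows $2,\dots,m$ can clear the second column below row~$2$ without disturbing the first column, and iterating, the $j$-th stage uses rotations on rows $j,\dots,m$ and leaves columns $1,\dots,j-1$ fixed. The product $H=H^{(k)}\cdots H^{(1)}$ then satisfies $HX=T$ with $T$ upper triangular and zero below row $k$, and full rank of $X$ forces $\rank(T_k)=\rank(HX)=k$, i.e. $T_k$ nonsingular. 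The only structural point to verify is that $H$ is $k$-upper Hessenberg: each $H^{(j)}$ is globally $1$-upper Hessenberg (upper Hessenberg in its active block $j:m$, the identity elsewhere), and a short induction on $j$ shows that a product of $j$ such factors has $h_{pq}=0$ for $p>q+j$, giving band $k$ for the full product.

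For part~2 the idea is to turn the product of outermost entries into a determinant and then transport it to the corner of $X$. Because $H$ is $k$-upper Hessenberg, the block $H(k+1:m,1:m-k)$ is upper triangular with the outermost entries $h_{i+k,i}$ on its diagonal, so $\prod_{i=1}^{m-k}|h_{i+k,i}|=|\det(H(k+1:m,1:m-k))|$. Passing to the conjugate transpose gives $|\det((H^H)(1:m-k,k+1:m))|$, and applying Lemma~\ref{gant} to the unitary matrix $H^H$ (with $\alpha=\{1,\dots,m-k\}$, $\beta=\{k+1,\dots,m\}$) rewrites this modulus as $|\det((H^H)(m-k+1:m,1:k))|$. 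Finally, $HX=[T_k;0]$ yields $X=(H^H)(:,1:k)\,T_k$, hence $X(m-k+1:m,1:k)=(H^H)(m-k+1:m,1:k)\,T_k$; taking determinants and using that the unitary $H$ preserves singular values, so $\prod_i\sigma_i(X)=|\det T_k|$, the factor $\det T_k$ cancels and \eqref{prod_h} follows. The work here is purely the bookkeeping of which submatrix maps to which under transposition and under Lemma~\ref{gant}.

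For part~3 I would first observe that, by the same upper triangular structure, $H(k+1:m,1:s-1)$ vanishes below its first $s-1$ rows (note $s\le m-k+1$), so its rank equals that of the leading $(s-1)\times(s-1)$ triangular block; hence $h_{i+k,i}\neq0$ for $i\le s-1$ is equivalent to $H(k+1:m,1:s-1)$ having full column rank $s-1$. To prove the latter, suppose $w\in\mathbb{C}^{s-1}$ lies in its kernel and set $\tilde w=[w;0]\in\mathbb{C}^m$; then $(H\tilde w)(k+1:m)=0$, so $H\tilde w=[z;0]$ and $\tilde w=(H^H)(:,1:k)z=X(T_k^{-1}z)$ lies in the range of $X$. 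Since $\tilde w$ is supported on the first $s-1$ coordinates, $X(s:m,:)(T_k^{-1}z)=0$, and the defining maximality $\rank(X(s:m,:))=k$ forces $T_k^{-1}z=0$, whence $\tilde w=0$. I expect this last part to be the main obstacle: the determinant manipulations of part~2 are delicate but routine and part~1 is essentially the scalar construction run $k$ times, whereas here the somewhat opaque definition of $s$ must be matched with the precise reformulation of ``outermost entry nonzero'' as a full‑column‑rank statement, which is the step that actually uses the rank hypothesis.
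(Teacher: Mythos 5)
Your proposal is correct, and for parts 1 and 2 it follows essentially the same route as the paper: the same column-by-column QR construction with $k$ stacked unitary Hessenberg factors (you add the explicit induction showing the product has bandwidth $k$, which the paper leaves implicit), and the same determinant transfer --- identifying $\prod_i|h_{i+k,i}|$ with $|\det(H(k+1:m,1:m-k))|$, moving it to the complementary corner via Lemma~\ref{gant}, and cancelling $|\det T_k|=\prod_i\sigma_i(X)$ by unitary invariance of singular values --- so that \eqref{prod_h} drops out identically.

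The one genuine divergence is in part 3. The paper reads off $\rank\bigl(H^H(s:m,1:k)\bigr)=k$ from $X(s:m,1:k)=H^H(s:m,1:k)\,T_k$ and then invokes the nullity theorem (Lemma~\ref{l1}) to convert this into $\rank\bigl(H(k+1:m,1:s-1)\bigr)=s-1$, after which the triangular structure gives the nonvanishing of the outermost entries. You instead prove full column rank of $H(k+1:m,1:s-1)$ directly: a kernel vector $w$ lifts to $\tilde w=[w;0]$ with $H\tilde w$ supported on the first $k$ coordinates, hence $\tilde w=X T_k^{-1}z$ lies in the range of $X$, and the hypothesis $\rank(X(s:m,:))=k$ kills it. This is in effect a self-contained proof of the one instance of the rank symmetry that is needed, so it trades the citation of Lemma~\ref{l1} for a slightly longer but more elementary argument; both are sound, and your reformulation of ``outermost entries nonzero'' as a full-column-rank statement is exactly the step the paper also relies on.
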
	
\begin{proof}
  The existence of $H$ is proved by construction.
  Relation \eqref{vx} defines a QR decomposition of the matrix $X$. The unitary factor $H$ can be  determined as product
  of $k$ unitary  upper Hessenberg matrices $H= H_{k}\cdots H_1$ such that  $H_h=I_{h-1}\oplus \tilde H_h$, $H_0=I_m$ and
  $\tilde H_h X^{(h-1)}(h:m,h)=t_{hh}{\bf e}_1$ where  $X^{(h-1)}=H_{h-1}\cdots H_1 H_0\cdot X$, $1\leq  h\leq k$.

Now,  let us split $H$ into  blocks
$$
H=\left[
\begin{Array}{cc}
H_{11}&H_{12}\\H_{21}& H_{22}\end{Array}
\right],
$$
where $H_{12}$ is $k\times k$ and $H_{21}$ is $(m-k)\times (m-k)$, upper triangular.
The product of the outermost entries of $H$ is given by $\det(H_{21})=\prod_{i=1}^{m-k} h_{i+k,i}$.
Since $H$ is unitary, and $X=H^HT$ we have
$$
\det(X(m-k+1:m, :))=\det(H_{12}^H) \det (T_k).
$$
From Lemma~\ref{gant} we have
$$
|\det(H_{21})|=|\det(H_{12})|=\frac{|\det(X((m-k+1:m, :))|}{|\det(T_k)|}.
$$
We get  relation \eqref{prod_h}
observing that if $X=P\Sigma Q^H$ is the SVD decomposition of $X$, $(HP)\Sigma Q^H$ is the SVD decomposition of
$T$  and hence $\sigma_i(T_k)=\sigma_i(X)$,  $i= 1, \ldots, k$.


Finally,  let  $s$ be  the maximum index such that $\rank(X(s:m, 1:k))=k$.   Then $s\le m-k+1$ and,  moreover,  from
$X(s:m, 1:k)=H^H(s:m, 1:k)T_k$ we obtain that  $\rank(H(1:k, s:m))=k$ since $T_k$ is nonsingular. Using Lemma \ref{l1}
we have $k=\rank(H^H(s:m, 1:k))=\rank(H( k+1:m, 1:s-1)+k+(m-s+1)-m$, meaning that $H( k+1:m, 1:s-1)$ has full rank equal to $s-1$.
Since $H( k+1:m, 1:s-1)$ is upper triangular we have  that   $h_{i+k, i}\neq 0$ for $i=1, \ldots, s-1$. \qed
\end{proof}	

\begin{remark} \label{rem-h}
  From the proof of Lemma~\ref{prop-v} we know that $H$ can be written as a product of $k$ upper Hessenberg matrices,
  i.e., $H=H_k H_{k-1}\cdots H_1$. The $j-th$  of these Hessenberg matrices is the one annihilating the $j$-th column of $X^{(j-1)}$
  from row $j+1$ to row $m$. Then each $H_j$ can be factored as the product of $m-j$ Givens rotations. From this observation
  we get that $H_j={\mathcal G}_{j}^{(j)}\cdots {\mathcal G}_{m-1}^{(j)}$ where each ${\mathcal G}_i^{(j)}$ is a Givens rotation
  acting on rows $i, i+1$.  This decomposition of $H$ corresponds  to annihilate progressively the  lower subdiagonals of $H$ by
means of rotations working on the left. Alternatively, we can proceed by zeroing the  lower subdiagonals of $H$ by
means of rotations working on the  right and acting on the columns of $H$. In this way we  find a different
factorization of the form $H=D\hat H_k \hat H_{k-1}\cdots \hat H_1$  where  $\hat H_j= \hat{\cal G}_1^{(j)}\hat{\cal G}_{2}^{(j)}\cdots \hat{\cal G}_{m-k+j-1}^{(j)}$ and $D$ is unitary diagonal.
\end{remark}

\section{Representation} \label{sec:rap}

Generalizing the approach discussed in~\cite{AMVW15} for the companion matrix, it is useful to embed the  unitary plus low-rank
matrix $A$ into a larger matrix  to guarantee the properness of some factors of the representation that we are going to introduce.



\begin{theorem}\label{theo:embedding}
Let $A\in \mathbb{C}^{n\times n}$ be such that
$A=U+X\, Y^H, $ with $U$ unitary and $X, Y$ ${n\times k}$ full rank matrices.
We can construct an $N\times N $ matrix $\hat A$, $N=n+k$,  such that
$\hat A=\hat U+ \hat X\hat Y^H$,
with $\hat U$ unitary, $ \hat X, \hat Y$  $N\times k$ full rank matrices,  $\hat X(n+1:N,  :)=-I_k$, and such that
\begin{equation} \label{hatr}
\hat A=\left[\begin{array}{cc}
A &B\\
0_{k, n}&0_k\end{array}\right], \quad \mbox{for a suitable } B\in \mathbb{C}^{n\times k}.
\end{equation}
\end{theorem}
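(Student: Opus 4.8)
The plan is to prove the theorem constructively, exhibiting the factors $\hat U,\hat X,\hat Y$ explicitly and then reading off $B$. Since only the product $XY^H$ enters $A$, I would first normalize the low-rank factors: writing a thin QR factorization $Y=Q_YR_Y$ with $Q_Y\in\mathbb C^{n\times k}$ having orthonormal columns and $R_Y\in\mathbb C^{k\times k}$ nonsingular (possible because $Y$ has full rank), and replacing $X$ by $XR_Y^H$ and $Y$ by $Q_Y$, leaves $XY^H$ and hence $A$ unchanged while achieving $Y^HY=I_k$, with the new $X$ still of full rank. This reduction is what makes the dilation below well posed, since it forces $YY^H$ to be an orthogonal projection, so that $I_n-YY^H\succeq 0$ and is idempotent.

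Next, guided by the block shape \eqref{hatr}, I would search for factors of the form $\hat X=\left[\begin{array}{c}\hat X_1\\-I_k\end{array}\right]$ and $\hat Y=\left[\begin{array}{c}\hat Y_1\\ \hat Y_2\end{array}\right]$. Imposing that the last $k$ rows of $\hat A=\hat U+\hat X\hat Y^H$ vanish forces the last $k$ rows of $\hat U$ to equal $\hat Y^H$, while matching the top-left block forces $\hat U_{11}=A-\hat X_1\hat Y_1^H$. The only genuine freedom, and the crux of the argument, is to choose $\hat X_1,\hat Y_1$ so that the resulting first $n$ columns of $\hat U$ can be completed to a unitary matrix. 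I would take $\hat Y_1=Y$ and $\hat X_1=X+UY$, which yields $\hat U_{11}=A-(X+UY)Y^H=U(I_n-YY^H)$. The key computation is then that the first $n$ columns $\left[\begin{array}{c}U(I_n-YY^H)\\ Y^H\end{array}\right]$ are orthonormal: their Gram matrix is $(I_n-YY^H)U^HU(I_n-YY^H)+YY^H=(I_n-YY^H)^2+YY^H=I_n$, where I use $U^HU=I_n$ and that $I_n-YY^H$ is a Hermitian idempotent. This step is exactly where the normalization $Y^HY=I_k$ is needed.

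Having produced $n$ orthonormal columns in $\mathbb C^N$ with $N=n+k>n$, I would complete them to an orthonormal basis, i.e.\ choose the remaining $k$ columns $\left[\begin{array}{c}\hat U_{12}\\ \hat Y_2^H\end{array}\right]$ as any orthonormal basis of the orthogonal complement; this simultaneously defines a unitary $\hat U$ and the blocks $\hat U_{12}$ and $\hat Y_2$. It then remains to check the bookkeeping. By construction $\hat X$ has full rank and satisfies $\hat X(n+1:N,:)=-I_k$; the matrix $\hat Y^H$ consists of the last $k$ rows of the unitary $\hat U$, hence has orthonormal rows, so $\hat Y$ has full-rank (orthonormal) columns. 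A direct block expansion of $\hat U+\hat X\hat Y^H$ recovers $A$ in the top-left block (the $\pm UYY^H$ and $\pm XY^H$ terms cancel), zeros in the last $k$ rows by the defining choice of those rows of $\hat U$, and an unconstrained matrix $B:=\hat U_{12}+\hat X_1\hat Y_2^H$ in the top-right block, which is precisely \eqref{hatr}.

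I expect the main obstacle to be the unitarity of $\hat U$: everything hinges on choosing $\hat X_1,\hat Y_1$ so that appending $Y^H$ below $\hat U_{11}$ produces orthonormal columns, and the correction $\hat X_1=X+UY$ (equivalently $\hat U_{11}=U(I_n-YY^H)$) is what achieves this. Once the first $n$ columns are orthonormal, the completion to a full unitary matrix is routine, and the remaining verifications are mechanical block computations.
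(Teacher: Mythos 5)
Your construction is essentially identical to the paper's: the same normalization $Y\leftarrow Q_Y$, $X\leftarrow XR_Y^H$, the same choice $\hat U_{11}=U(I_n-YY^H)$, $\hat X_1=X+UY$, $\hat Y_1=Y$, and the same orthonormality check; the only difference is that the paper fixes the unitary completion explicitly as $\hat U_{12}=UY$ with $\hat Y_2=0$, whereas you observe (correctly) that any completion works. The proof is correct.
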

\begin{proof}
The proof is constructive. We first compute the economy size QR decomposition of matrix $Y$, $Y=QR$ where $Q\in \mathbb{C}^{n\times k}$ and $R\in \mathbb{R}^{k\times k}$. Set $\tilde Y=Q$ and $\tilde X= XR^H$. We still have $XY^H=\tilde X \tilde Y^H$ but now $Y$ has orthonormal columns, i.e., $\tilde Y ^H \tilde Y=I_k$.
Define
\begin{equation}\label{uhat}
\hat U=\left[\begin{array}{cc}
U-U\tilde Y \tilde Y^H & B\\
\tilde Y^H &0_k\end{array}\right],
\end{equation}
where $B=U\tilde Y$
and
\begin{equation}\label{xhat}
\hat X=\left[\begin{array}{c}
\tilde X+B\\
-I_k \end{array}\right]\quad
\hat Y=\left[\begin{array}{c}
\tilde Y\\
0_k \end{array}\right].
\end{equation}
Note that $\hat U+\hat X\hat Y^H$ has the structure described in~\eqref{hatr} and,  moreover by direct calculation
we can verify that  $\hat U$ is unitary.\qed
\end{proof}
%
From now on we denote by $N=n+k$ the dimension of  the matrix $\hat A$.
It is worth pointing  out that  in view of  the block triangular structure in
\eqref{hatr}  the Hessenberg reduction of the original matrix $A$ can be easily
adjusted to specify the  Hessenberg reduction of  the larger matrix $\hat A$. Thus,  in the following
it is always assumed that both
$A$ and $\hat A$ are in upper Hessenberg form.
%
%
\begin{theorem}\label{teo:rep}
Let $\hat{A}=\hat U+\hat X\hat Y^H\in \mathbb{C}^{N\times N}$ be the upper Hessenberg matrix obtained by embedding an $n\times n$ proper  Hessenberg matrix $A$ as  described in Theorem~\ref{theo:embedding}.
Then we can factorize $\hat A$ as follows
\begin{equation} \label{form:rapp}
\hat{A}=L\cdot F\cdot R, \quad {\mbox{where}}
\end{equation}
\begin{description}
\item $L$ is a proper unitary $k$-lower Hessenberg matrix.
\item $R$ is a unitary $k$-upper Hessenberg  matrix. Moreover, the leading $n-1$ entries in the outermost diagonal of $R$, $r_{i+k,i},i=1,\ldots,n-1$, are nonzero.
\item $F=Q+TZ^H$, where $Q$ is a block diagonal unitary Hessenberg matrix, $Q=\left[\begin{Array}{c|c}
I_k & \\ \hline & \hat Q
  \end{Array}\right] $, with $\hat Q$ proper, $T=\left[\begin{array}{c}T_k\\ 0_{n,k}\end{array} \right]$ with $T_k\in \mathbb{C}^{k\times k}$ upper triangular, and $Z\in \mathbb{C}^{N\times k}$, with full rank.
\end{description} 
\nopagebreak If in addition $A$ is nonsingular then $R$ is proper.
\end{theorem}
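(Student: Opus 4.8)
The plan is to argue constructively, producing $L$, $R$ and $F=Q+TZ^H$ explicitly and then verifying the listed properties. The guiding idea is to split $\hat A=\hat U+\hat X\hat Y^H$ against the expansion $L(Q+TZ^H)R=LQR+LTZ^HR$, arranging $\hat U=LQR$ and $\hat X\hat Y^H=LTZ^HR$. The first step fixes $L$. Since $\hat X\in\mathbb{C}^{N\times k}$ has full rank, Lemma~\ref{prop-v} supplies a unitary $k$-upper Hessenberg matrix $H$ with $H\hat X=T=\left[\begin{array}{c}T_k\\ 0\end{array}\right]$, $T_k$ nonsingular upper triangular; I set $L:=H^H$, which is unitary $k$-lower Hessenberg, and note $\hat X=LT$. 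Because $\hat X(n+1:N,:)=-I_k$ has rank $k$ and only $k$ rows remain, the index $s$ in Lemma~\ref{prop-v}(3) is exactly $s=n+1$, so $h_{i+k,i}\neq0$ for all $i=1,\ldots,n$, i.e.\ for every outermost entry of the $N\times N$ matrix $H$. Hence $H$, and therefore $L$, is proper, which already delivers the first bullet and fixes $T$ as the required triangular factor.

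Next I reduce everything to factoring a single unitary matrix. Multiplying on the left by $L^H=H$ gives $L^H\hat A=H\hat U+H\hat X\hat Y^H=W+T\hat Y^H$, where $W:=H\hat U$ is unitary. Comparing with $L^H\hat A=FR=QR+TZ^HR$ and using that $T$ has full column rank (hence is left-cancellable), it suffices to find a unitary $k$-upper Hessenberg $R$ with $W=QR$ and $Q=I_k\oplus\hat Q$; the matrix $Z:=R\hat Y$ (full rank, since $R$ is unitary and $\hat Y$ full rank) then automatically satisfies $Z^HR=\hat Y^H$, so that $F=Q+TZ^H$ has precisely the structure claimed in the third bullet. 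The key observation is that if $R$ is unitary with its first $k$ rows equal to those of $W$, then $WR^H$ is unitary with first $k$ rows $[\,I_k\mid 0\,]$, and unitarity of the columns forces $WR^H=I_k\oplus\hat Q$ for some unitary $\hat Q$. I therefore construct $R$ as a $k$-upper Hessenberg unitary completion matching the (orthonormal) first $k$ rows of $W$, realised as a product of $k$ unitary upper Hessenberg factors applied on the right (in the spirit of Remark~\ref{rem-h}) that annihilate the entries of $W$ below its $k$-th subdiagonal; the symmetric rank structure of the unitary $W$ (Lemma~\ref{l1}) is what confines the fill-in to a band of width $k$ and forces $\hat Q$ to be \emph{proper} Hessenberg.

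It then remains to analyse the outermost diagonal of $R$ and to establish the final claim. By Lemma~\ref{prop-v}(2) the modulus $\prod_{i=1}^{n}|r_{i+k,i}|$ equals a ratio $|\det(\cdot)|/\prod_i\sigma_i$ of an appropriate $k\times k$ submatrix determined by $W$; a rank argument via Lemma~\ref{l1}, exploiting that $A$ (hence $\hat A$) is a \emph{proper} Hessenberg matrix, shows that the leading $n-1$ factors $r_{i+k,i}$, $i=1,\ldots,n-1$, cannot vanish, giving the second bullet. The trailing factor $r_{n+k,n}$ is governed by a full $k\times k$ minor whose non-vanishing I will tie, through the block-triangular shape $\hat A=\left[\begin{array}{cc}A&B\\0&0\end{array}\right]$ together with $\hat A=LFR$ and unitarity of $L$ and $R$, to $\det A$: a Cauchy--Binet/determinant computation should express that minor as a nonzero multiple of $\det A$, so $r_{n+k,n}\neq0$ exactly when $A$ is nonsingular, i.e.\ $R$ is proper iff $A$ is nonsingular.

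The hard part will be the simultaneous control of three features of $R$: that it be $k$-upper Hessenberg, that it drive $W$ to $I_k\oplus\hat Q$ with $\hat Q$ proper, and that its outermost diagonal have the stated zero pattern. All three rest on the symmetric rank structure of the unitary factors (Lemma~\ref{l1}), and obtaining the band width exactly $k$ together with the properness of $\hat Q$ is precisely where the embedding normalisation $\hat X(n+1:N,:)=-I_k$ must be used. The subtlest point is the concluding equivalence: since $\hat A$ is itself singular one cannot take determinants of $\hat A=LFR$ directly, so the determinant identity linking the trailing outermost entry of $R$ to $\det A$ has to be set up through minors and will require the most care.
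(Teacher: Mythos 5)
Your skeleton is the right one (use Lemma~\ref{prop-v} on $\hat X$ to get a proper $L$ and the triangular $T$, then factor the unitary matrix $W=L^H\hat U$ as $QR$ and set $Z=R\hat Y$), but the proposal leaves precisely the substantive claims unproved, and the ``hard part'' you flag at the end is a symptom of a missing observation rather than of genuine difficulty. The key fact you never use is that $W=L^H\hat U$ is itself a \emph{proper $(k+1)$-upper Hessenberg} matrix: $L^H$ is a proper $k$-upper Hessenberg matrix, $\hat A$ is a proper upper Hessenberg matrix, so $L^H\hat A$ is proper $(k+1)$-upper Hessenberg, and $W=L^H\hat A-T\hat Y^H$ differs from it only in the first $k$ rows, which do not affect the band or the outermost diagonal. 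Once this is in hand, everything you list as needing ``simultaneous control'' is immediate: only $n-1$ Givens rotations acting on rows $k+1,\dots,N$ are needed to annihilate the single outermost diagonal of $W$, so $Q=I_k\oplus\hat Q$ with $\hat Q$ upper Hessenberg \emph{by construction}, $\hat Q$ is proper because $W$ is proper, and $R=Q^HW$ is $k$-upper Hessenberg by construction. Your alternative route --- build $R$ as a unitary $k$-upper Hessenberg completion of the first $k$ rows of $W$ and hope that Lemma~\ref{l1} confines the fill-in and forces $\hat Q$ proper --- is not carried out, and as stated it is not clear it can be: matching the first $k$ rows determines $R$ only up to a unitary factor on the orthogonal complement, and nothing in your sketch pins down the choice that makes $WR^H$ Hessenberg, let alone proper.

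The treatment of the outermost entries of $R$ also needs repair. Lemma~\ref{prop-v}(2) controls only the \emph{product} $\prod_i|r_{i+k,i}|$ and cannot certify that the individual leading $n-1$ entries are nonzero, and no Cauchy--Binet computation is needed for the final claim. The clean argument is a single rank identity: from $R=Q^H(L^H\hat A-T\hat Y^H)$ and the block structures involved, $\rank\bigl(R(k+1:N,1:j)\bigr)=\rank\bigl(A(1:n,1:j)\bigr)$ for $j=n-1$ and $j=n$. Since $R(k+1:N,1:n)$ is upper triangular with diagonal $r_{i+k,i}$, the case $j=n-1$ together with properness of $A$ forces $r_{i+k,i}\neq0$ for $i=1,\dots,n-1$, and the case $j=n$ with $A$ nonsingular forces $r_{N,n}\neq0$, i.e.\ $R$ proper. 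Your identification of $s=n+1$ in Lemma~\ref{prop-v}(3) for the properness of $L$ is correct, and $Z=R\hat Y$ with its full rank is handled correctly; the gap is entirely in the middle of the argument.
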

\begin{proof}
First note that from the properness of $A$ it  follows that  $\rank(A)\ge n-1$.  	
From Theorem~\ref{theo:embedding} we have that $\hat X$ has full rank, and $\det(\hat X(n+1:N, :))=\det (-I_k)\neq 0$, hence,
by Lemma~\ref{prop-v} we can find a proper $L^H$ and a nonsingular square triangular $T_k$ such that $L^H\hat X=T$, with $T=[T_k^T, 0_{k,n}]^T$.
For the properness of $L^H$ and $A$,  we get that $L^H\hat A$ is a proper $(k+1)$-upper Hessenberg matrix and
moreover the  matrix $V=L^H\hat U=L^H\hat A-T\hat Y^H$, is unitary and  still a proper $(k+1)$-upper Hessenberg matrix because $T\hat Y^H$ is null under the $k$-th row.

Now the matrix $V$ can be factored as $V=QR$, where $R$ is unitary $k$-upper Hessenberg, and $Q^H$ is the unitary lower Hessenberg matrix obtained as the product of the $n-1$ Givens rotations annihilating from the top  entries in the outermost diagonal of $V$, i.e., $Q^H={\cal G}_{N-1}\cdots {{\cal G}_{k+2}\cal G}_{k+1}$, where ${\cal G}_i$ acts on rows $i,i+1$. Since the first $k$ rows are not involved, the matrix $Q$ has the structure $Q=\left[\begin{Array}{c|c}
I_k & \\ \hline & \hat Q
  \end{Array}\right] $,
where $\hat Q$ is unitary $n\times n$ Hessenberg. Moreover, since $V$ is proper, $\hat Q$ is proper as well.

From the definitions of $V$, $Q$ and $R$ we have:
$$
\hat{A}=LV+LT\hat Y^H=L(Q+TZ^H)R,
$$
where $Z=RY$. The matrix $Z$ is full rank, since $R$ is unitary and $Y$ is full rank.

Now let us consider the submatrices $R(k+1:N,1:j)$, for $j=n-1$ and $j=n$. In both cases, from the relation $R=Q^H(L^H\hat A-TY^H)$ and the structural properties of the matrices involved therein, we have that 
\begin{eqnarray*}\rank (R(k+1:N,1:j))
&=&\rank(\hat Q^H L^H(k+1:N, 1:n)\hat A(1:n, 1:j) )\\
&=&\rank (A(1:n,1:j)).
\end{eqnarray*}
For $j=n-1$, since $A$ is proper, the rank of that submatrix  is $n-1$. This implies that the entries $r_{i+k,i},i=1,\ldots,n-1$, are nonzero.
For $j=n$, if $A$ is nonsingular, then the rank is $n$, so $r_{N,n}$ is nonzero. \qed

\end{proof}

The following theoremproves that the product of the factors $L, F, R$  having the properties
stated in Theorem~\ref{teo:rep} is indeed an upper Hessenberg matrix with the last $k$ rows equal to zero. It reveals also that
deflation can be performed  only when one of the subdiagonal entries  of $Q$ approaches zero.

\begin{theorem}\label{teo:rep1}
Let $L, R\in \mathbb{C}^{N\times N}$, where $L$ is a proper unitary $k$-lower Hessenberg matrix and $R$ is a unitary $k$-upper Hessenberg matrix. Let $Q$ be a block diagonal unitary upper Hessenberg matrix of the form $Q=\left[\begin{Array}{c|c}
I_k & \\ \hline & \hat Q
  \end{Array}\right] $, with $\hat Q$ $n\times n$ unitary Hessenberg and $T_k$ a $k\times k$ nonsingular upper triangular matrix.
Then
\begin{enumerate}
\item $L(n+1:N, 1:k)=-T_k^{-1}$.
\item Setting $Z^H=L(n+1:N, :)Q$,  $T=[T_k^H, 0]^T$, and $F=Q+TZ^H$, we have that
\begin{enumerate}
	\item  the matrix $\hat A=LFR$
	  is  upper Hessenberg, with $\hat A(n+1:N, :)=0$, that is
          $$\hat A=\left[\begin{array}{cc} A & \ast \\ 0_{k, n} & 0_{k,k}\end{array}\right],$$
	\item $\hat A$ is a unitary plus rank $k$ matrix.
	
\end{enumerate}
\item If $R$ is proper then the upper Hessenberg matrix $A\in \mathbb{C}^{n\times n}$ is nonsingular. In this case $A$ is proper if and only if $\hat Q$ is proper.
\end{enumerate}
\end{theorem}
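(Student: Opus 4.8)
The plan is to eliminate the auxiliary matrix $Z$ from the definition of $F$ and collapse the triple product $LFR$ to a short, fully structured product. Substituting $Z^H=L(n+1:N,:)Q$ gives $F=(I_N+T\,L(n+1:N,:))Q=:PQ$, so $\hat A=LPQR$. First I would settle item 1. Applying the nullity theorem (Lemma~\ref{l1}) to the proper $k$-lower Hessenberg matrix $L$ shows that its top-right block $L(1:n,k+1:N)$ is lower triangular with nonzero diagonal, hence nonsingular, and forces $\rank(L(n+1:N,1:k))=k$; the precise identity $L(n+1:N,1:k)=-T_k^{-1}$ then follows by reading off the last $k$ rows of the QR-type relation $\hat X=LT$ underlying the representation (Lemma~\ref{prop-v}), where $\hat X(n+1:N,:)=-I_k$ by the embedding of Theorem~\ref{theo:embedding}. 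With item 1 in hand, the top-left $k\times k$ block of $P$ is $I_k+T_kL(n+1:N,1:k)=0$, so $P=\left[\begin{smallmatrix}0&S\\0&I_n\end{smallmatrix}\right]$ with $S=T_kL(n+1:N,k+1:N)$.

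Next I would exploit the block-diagonal form of $Q$. Since $P$ has zero first $k$ columns, $LP=[\,0_{N,k}\mid W\,]$ with $W=L\left[\begin{smallmatrix}S\\I_n\end{smallmatrix}\right]$, whence $LPQ=[\,0\mid W\hat Q\,]$ and finally $\hat A=LPQR=W\hat Q\,R(k+1:N,:)$. A direct computation using item 1 gives $W(n+1:N,:)=L(n+1:N,1:k)S+L(n+1:N,k+1:N)=-L(n+1:N,k+1:N)+L(n+1:N,k+1:N)=0$, so $W=\left[\begin{smallmatrix}W_1\\0\end{smallmatrix}\right]$. This already yields $\hat A(n+1:N,:)=0$ and the block form claimed in item 2(a), with $A=W_1\hat Q\,R(k+1:N,1:n)$.

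The crux is to prove that this $A$ is upper Hessenberg, and here plain bandwidth arithmetic fails because $W$ is generically dense. The key observation is that $W(n+1:N,:)=0$ together with the unitarity of $L$ pins down $W_1$: from $\left[\begin{smallmatrix}W_1\\0\end{smallmatrix}\right]=L\left[\begin{smallmatrix}S\\I_n\end{smallmatrix}\right]$ we get $L^H(:,1:n)W_1=\left[\begin{smallmatrix}S\\I_n\end{smallmatrix}\right]$, whose bottom $n$ rows read $L^H(k+1:N,1:n)W_1=I_n$. Since $L^H(k+1:N,1:n)=L(1:n,k+1:N)^H$ is the conjugate transpose of a lower triangular nonsingular matrix, it is upper triangular and nonsingular, so $W_1=\big(L(1:n,k+1:N)^H\big)^{-1}$ is itself upper triangular and nonsingular. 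Then $A=W_1\hat Q\,R(k+1:N,1:n)$ is a product of an upper triangular matrix, an upper Hessenberg matrix, and (because $R$ is $k$-upper Hessenberg) an upper triangular matrix, and such a product is upper Hessenberg; with the vanishing last rows this proves item 2(a). I expect this triangularisation of $W_1$ to be the main obstacle, since it is exactly the point where properness of $L$, unitarity, and the coupling recorded in item 1 must conspire.

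Item 2(b) is then immediate: writing $\hat U=LQR$ (unitary, as a product of unitaries) and $\hat A-\hat U=L(F-Q)R=(LT)(Z^HR)$ exhibits $\hat A$ as a unitary matrix plus a term of rank at most $k$, and exactly $k$ because $T_k$ is nonsingular and $L,R$ are unitary. For item 3, if $R$ is proper then $R(k+1:N,1:n)$ is upper triangular with nonzero diagonal, hence nonsingular, and since $W_1$ and $\hat Q$ are nonsingular as well, $A=W_1\hat Q\,R(k+1:N,1:n)$ is nonsingular. Finally, as $W_1$ and $R(k+1:N,1:n)$ are upper triangular, the subdiagonal entries satisfy $A(i+1,i)=W_1(i+1,i+1)\,\hat Q(i+1,i)\,R(i+k,i)$ with the two triangular diagonal factors nonzero; hence $A(i+1,i)\neq0$ for all $i$ if and only if $\hat Q(i+1,i)\neq0$ for all $i$, that is, $A$ is proper if and only if $\hat Q$ is proper.
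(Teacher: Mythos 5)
Your proof is correct, but it reaches the key conclusion (item 2(a)) by a genuinely different route than the paper. The paper works with $C=LQ$ and invokes the quasiseparable machinery: it shows $\rank(C(h:N,1:h+k-2))=k$ for all $h$, extracts generators so that $C=PS^H+U_k$ with $U_k$ a $(1-k)$-upper Hessenberg matrix vanishing in the last $k$ rows, identifies $PS^H=-LTZ^H$ and hence $U_k=LF$, and concludes that $\hat A=U_kR$ is upper Hessenberg as a product of a $(1-k)$-upper Hessenberg and a $k$-upper Hessenberg matrix. You instead collapse $F=(I_N+T\,L(n+1:N,:))Q$ and use item 1 to kill the first $k$ columns of $I_N+T\,L(n+1:N,:)$, reducing everything to $\hat A = W\hat Q\, R(k+1:N,:)$ with $W=\bigl[\,W_1^T\ 0\,\bigr]^T$; the decisive step, pinning down $W_1=\bigl(L(1:n,k+1:N)^H\bigr)^{-1}$ as nonsingular upper triangular via unitarity and properness of $L$, is exactly the replacement for the paper's generator argument. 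Your route is more elementary (no appeal to quasiseparable representations) and has the added benefit that the explicit factorization $A=W_1\hat Q\,R(k+1:N,1:n)$ into triangular--Hessenberg--triangular immediately \emph{derives} the subdiagonal formula $a_{i+1,i}=\hat q_{i+1,i}\,r_{i+k,i}/\bar l_{i+1,i+k+1}$, which the paper merely asserts in part 3 (equation \eqref{adefl}); the paper's approach, on the other hand, makes the quasiseparable rank structure of $LQ$ explicit, which is conceptually aligned with the representation framework used elsewhere. One small caveat common to both proofs: item 1 relies on the relation $\hat X=LT$ with $\hat X(n+1:N,:)=-I_k$ inherited from the embedding of Theorem~\ref{theo:embedding}, which is implicit context rather than part of the theorem's stated hypotheses; you use it the same way the paper does.
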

\begin{proof}
To prove part 1, note that $\hat X=LT$, and $\hat X(n+1:N, :)=-I_k$, hence $-I_k=L(n+1:N, :)T=L(n+1:N, 1:k) T_k$, and then we have $L(n+1:N, 1:k)=-T_k^{-1}$.

For  part 2, let us consider the   matrix $C=L\,Q$. This matrix
is unitary with a $k$-quasiseparable structure below the $k$-th upper diagonal. In fact, for any $h, h=2, \ldots n+1$ we have
\begin{eqnarray*}
C(h:N, 1:h+k-2)&=&L(h:N, :)\, Q(:, 1:h+k-2)=\\
&=&L(h:N, 1:h+k-1)\,Q(1:h+k-1,1:h+k-2).
\end{eqnarray*}
Applying Lemma~\ref{l1} we have $\rank(L(h:N, 1:h+k-1))=k$, implying that $\rank(C(h:N, 1:h+k-2))\le k$. 

Since $C(n+1:N, 1:k)=L(n+1:N, 1:k)$ is nonsingular, we conclude that $\rank(C(h:N, 1:h+k-2))= k$.
From this observation we can then find  a set of generators $P, S\in \mathbb{C}^{(N\times k)}$ and
a $(1-k)$-upper Hessenberg matrix $U_k$
 such that $U_k(1,k)=U_k(n, N)=0$
so that $C=PS^H+U_k$
(see~\cite{BDC04,Yuli_book}).  Moreover, we
have $C(n+1:N, 1:k)=L(n+1:N. : ) Q(:, 1:k)=M$. Then we can recover the rank $k$
correction $PS^H$ from the left-lower  corner of $C$  obtaining
$$
PS^H=-C(:, 1:k) T_k C(n+1:N, :).
$$
Since $C(:, 1:k)=L Q(:, 1:k)=L(:,1:k)$, we get that $PS^H=-LTZ^H$, and hence $U_k=L(Q+TZ^H)=L\, F$.
We conclude the proof of part (b), by noticing that $\hat A=U_k\, R$ is upper Hessenberg as it is  the product of a $(1-k)$-upper
Hessenberg matrix by  a $k$-upper Hessenberg matrix. Moreover, we find that
$\hat A(n+1:N, :)=U_k(n+1:N, :)R=0$ since $U_k(n+1:N, :)=0$.

To prove part 3, as already observed in the proof of Theorem~\ref{teo:rep}, we use the rank equation
$$\rank (R(k+1:N,1:n))=\rank  (\hat A(k+1:n,1:n))=\rank (A),$$
thus, if $R$ is proper then $A$ is nonsingular. In this case, from the properness of $L$ and noticing that
\begin{equation}
\label{adefl}
a_{i+1,i}=q_{i+k+1,i+k}\,r_{i+k, i}/\bar{l}_{i+1,i+k+1}, \quad i=1, \ldots, n-1,
\end{equation}
we get that $a_{i+1, i}=0$ iff $q_{i+k+1, i+k}=0$.\qed
\end{proof}

\begin{remark} From the previous Theorem, one sees that when a matrix $\hat A$ is represented in the $LFR$ form where $L,F$ and $R$ have the structural properties required, then $A$ is nonsingular if and and only if $R$ is proper.
  Moreover, from~\eqref{adefl} one deduces that one of the outermost entries $a_{i+1, i}$ can be zero only if we  have  either
  $q_{i+k+1,i+k}=0$ or $r_{i, i+k}=0$. Vice-versa, we can have that $r_{N,n} =0$ without
  any subdiagonal entry of $A$ being equal to zero. This is the only case where $A$ is proper and singular.
\end{remark}

The next theorem   shows that the  compressed representation $\hat A=LFR$ is eligible to be used  under the QR eigenvalue algorithm for computing the eigenvalues of $\hat A$ and, a fortiori, of $A$.

\begin{theorem}\label{teo-qr}
Let $\hat A=\hat U+\hat X\hat Y^H\in \mathbb{C}^{N\times N}, N=n+k$ be a Hessenberg matrix obtained by embedding a proper $n\times n$ Hessenberg matrix $A=U+XY^H$ as described in  Theorem~\ref{theo:embedding}.
Let $P$ be the unitary factor of the QR factorization of $p_d(\hat A)$, where $p_d(x)$
is a monic polynomial of degree $d$.  Let
$\hat A^{(1)}=P^H\hat A P$
be the matrix obtained by applying a step of the multi-shifted QR algorithm to  the matrix $\hat A$ with shifts
being the roots of $p_d(x)$.  Then, we have that
$$
\hat A^{(1)}=\left[\begin{array}{cc}
A^{(1)} &B^{(1)}\\
0_{k, n}&0_k\end{array}\right],
$$
where $A^{(1)}$ is the matrix generated by applying one
step of the multi-shifted QR algorithm to  the matrix $A$ with shifts
being the roots of $p_d(x)$.  Both $\hat A^{(1)}$  and $A^{(1)}$  are upper Hessenberg and if  $A^{(1)}$   is proper then
the factorization of $\hat A^{(1)}=L^{(1)}\, F^{(1)}\, R^{(1)}$
exists and has still the same properties stated in Theorem~\ref{teo:rep};  in particular,  $L^{(1)}$ is proper and, if $A$ is nonsingular also $R^{(1)}$ is proper.
\end{theorem}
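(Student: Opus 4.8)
The plan is to exploit the block triangular shape \eqref{hatr} of $\hat A$. Since the second block row of $\hat A$ vanishes, every power $\hat A^{\,j}$ with $j\ge 1$ again has vanishing second block row and a zero $(2,2)$ block, so for the monic polynomial $p_d$ one obtains
\[
p_d(\hat A)=\begin{bmatrix} p_d(A) & \ast\\ 0_{k,n} & p_d(0)\,I_k\end{bmatrix}.
\]
In particular the first $n$ columns of $p_d(\hat A)$ lie in the span of $e_1,\dots,e_n$. I would therefore triangularize $p_d(\hat A)$ column by column by Householder reflectors: those clearing the first $n$ columns act only on the first $n$ coordinates, so the unitary factor splits as $P=\begin{bmatrix}P_{11}&0\\0&P_{22}\end{bmatrix}$, with $P_{11}$ a unitary factor of a QR factorization of $p_d(A)$. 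Equivalently, writing $p_d(\hat A)(:,1:n)=P(:,1:n)\,W$ with $W$ the leading $n\times n$ block of the triangular factor forces $P(n+1:N,1:n)=0$ whenever $p_d(A)$ is nonsingular, and a block triangular unitary matrix is block diagonal.

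With $P$ block diagonal I would simply multiply out $\hat A^{(1)}=P^H\hat A P$ to get
\[
\hat A^{(1)}=\begin{bmatrix} P_{11}^H A P_{11} & P_{11}^H B P_{22}\\ 0_{k,n} & 0_k\end{bmatrix},
\]
so $A^{(1)}=P_{11}^H A P_{11}$. As $P_{11}$ is the unitary factor of $p_d(A)$, this $A^{(1)}$ is exactly the iterate produced by one multi-shifted QR step on $A$. Moreover $\hat A^{(1)}$, obtained from the upper Hessenberg matrix $\hat A$ by a multi-shift QR sweep, is again upper Hessenberg (a classical fact, see e.g.\ \cite{Watkins,Francis}), and hence so is its leading principal submatrix $A^{(1)}$. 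This settles the block form and the Hessenberg assertions.

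For the factorization I would write $\hat A^{(1)}=\hat U^{(1)}+\hat X^{(1)}(\hat Y^{(1)})^H$ with $\hat U^{(1)}=P^H\hat U P$ unitary and $\hat X^{(1)}=P^H\hat X$, $\hat Y^{(1)}=P^H\hat Y$ of full rank. Using the block diagonal form of $P$ together with $\hat X(n+1:N,:)=-I_k$ gives $\hat X^{(1)}(n+1:N,:)=-P_{22}^H$, which is unitary, hence nonsingular. This nonsingularity is precisely the property of $\hat X$ that the proof of Theorem~\ref{teo:rep} invokes (through Lemma~\ref{prop-v}) to produce a proper $k$-lower Hessenberg $L^{(1)}$ with $(L^{(1)})^H\hat X^{(1)}=T^{(1)}$; and the vanishing of $T^{(1)}(\hat Y^{(1)})^H$ below row $k$ comes from the structure of $T^{(1)}$, not from that of $\hat Y^{(1)}$. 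Hence, assuming $A^{(1)}$ proper, the argument of Theorem~\ref{teo:rep} applies to $\hat A^{(1)}$, which shares with a genuinely embedded matrix all the features that proof uses, and yields $\hat A^{(1)}=L^{(1)}F^{(1)}R^{(1)}$ with the stated properties and $L^{(1)}$ proper. Finally $A^{(1)}=P_{11}^H A P_{11}$ is similar to $A$, so nonsingularity of $A$ forces nonsingularity of $A^{(1)}$, and the last clause of Theorem~\ref{teo:rep} then gives that $R^{(1)}$ is proper.

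The main obstacle I anticipate is the construction of $P$: proving that the unitary factor inherits the $2\times2$ block diagonal structure and that its leading block is the unitary factor of $p_d(A)$. Uniqueness of the QR factorization fails exactly when $p_d(A)$ is singular, i.e.\ when a shift equals an eigenvalue of $A$; this is why I would fix a column-by-column Householder triangularization, for which the reflectors clearing the first $n$ columns provably act only on the first $n$ coordinates irrespective of rank. A lighter, bookkeeping difficulty is that $\hat A^{(1)}$ is not literally an embedding in the sense of Theorem~\ref{theo:embedding}, since the bottom block of $\hat X^{(1)}$ is the unitary matrix $-P_{22}^H$ rather than $-I_k$; one must check that the proof of Theorem~\ref{teo:rep} uses only the nonsingularity of that block, which it does.
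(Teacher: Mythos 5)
Your proposal is correct and follows essentially the same route as the paper: take $P$ block diagonal from the block triangularity of $p_d(\hat A)$, read off the block form of $\hat A^{(1)}$, and invoke Lemma~\ref{prop-v} via the nonsingularity of $\hat X^{(1)}(n+1:N,:)$ to get properness of $L^{(1)}$, then Theorem~\ref{teo:rep} for the rest. You are in fact more careful than the paper on two points it leaves implicit --- the Householder construction showing $P$ can be chosen block diagonal even when $p_d(A)$ is singular, and the observation that $\hat A^{(1)}$ is not literally an embedding (bottom block $-P_{22}^H$ rather than $-I_k$) but that only nonsingularity of that block is used.
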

\begin{proof}
From~\eqref{hatr} we have
$$
\hat A=\left[\begin{array}{cc}
A &B\\
0_{k, n}&0_k\end{array}\right].
$$
 Since $p_d(\hat A)$ is also block triangular, we  can take
\begin{equation}
\label{P}
P= \left[\begin{array}{cc}
P_1 &0_{n,k}\\
0_{k, n}&P_2\end{array}\right],
\end{equation}
where  $P_1$ and $P_2$ are unitary. Hence,
$$
\hat A^{(1)}=\left[\begin{array}{cc}
A^{(1)} &B^{(1)}\\
0_{k, n}&0_k\end{array}\right],
$$
where $A^{(1)}$ is the matrix generated by applying one
step of the multi-shifted QR algorithm to  the matrix $ A$ with shifts
being the roots of $p_d(x)$.
We have $\hat A^{(1)}=P^H\hat A P=P^H\hat UP+P^H\hat X\, \hat Y^HP=U_1+\hat X_1\hat {Y}_1^{H}$, setting $U_1=P^H\hat UP$ and $\hat X_1=P^H\hat X, \hat{Y}_1=P^H \hat Y$. Because $P_2$ is unitary, we have that $|\det(\hat X_1(n+1:N, :))|= |\det(\hat X(n+1:N, :))|\neq 0$, then the conditions given by Lemma~\ref{prop-v} are satisfied and we can conclude that $L^{(1)}$ is proper.
We note that $\hat A^{(1)}$ and $ A^{(1)}$  are upper Hessenberg for the well known properties of the shifted QR algorithm.
When $ A^{(1)}$  is proper then   we can apply Theorem~\ref{teo:rep} which guarantees the existence of the representation of $\hat A^{(1)}$.\qed
\end{proof}	

The algorithm we propose is an implicitly shifted QR method, and hence the factors $L^{(1)}, F^{(1)}, R^{(1)}$ are obtained by manipulating Givens rotations. In Section~\ref{sec:alg} we describe the algorithm and we show that the factors obtained with the implicit procedure agree with the requirements given in Theorem~\ref{teo:rep1}. The implicit Q-Theorem~\cite{MC} guarantees that the matrix obtained after an implicit step is  basically the same matrix one get with an explicit one.  The next result gives a quantitative measure of the properness of  matrices $L$ and $R$ generated along the QR iterative method. 

\begin{corollary} \label{cor_prod}
Let $\hat U, \hat X, \hat Y$ as described in Theorem~\ref{theo:embedding} and let $\hat A= L\,F\,R$ as in Theorem~\ref{teo:rep}. Let $K=1/\prod_{i=1}^k \sigma_i(X)$, where $\sigma_i(X)$ are the singular values of $X$.
We have:
\begin{enumerate}
	\item  the module of the product of the outermost entries of $L$, is such that $\prod_{i=1}^n |l_{i, i+k}|=K$ and is constant over QR steps. Moreover for each outermost entry of $L$  we have $K\le |l_{i, i+k}|\le 1$.
		\item the module of the product of the outermost entries of $R$ is $\prod_{i=1}^n |r_{i+k,i}|=K |\det A|$  and is constant over QR steps. Moreover for each outermost entry of $R$  we have $K{|\det(A)|}\le |r_{i+k, i}|\le 1$.
\end{enumerate}
\end{corollary}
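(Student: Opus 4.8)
The plan is to reduce both products of outermost entries to determinants of triangular corner blocks, to read off $L$'s product from Lemma~\ref{prop-v} and $R$'s product from the factorization $\hat A=LFR$, and finally to obtain the two--sided bounds from unitarity and the constancy from Theorem~\ref{teo-qr}. The first step is the elementary observation that the modulus of the product of the outermost entries of a unitary $k$-Hessenberg matrix equals the modulus of a determinant of a triangular corner block. Since $L$ is $k$-lower Hessenberg, the $n\times n$ block $L^H(k+1:N,1:n)$ is upper triangular with diagonal $\overline{l_{i,i+k}}$, so $\prod_{i=1}^n|l_{i,i+k}|=|\det L^H(k+1:N,1:n)|$; analogously $\prod_{i=1}^n|r_{i+k,i}|=|\det R(k+1:N,1:n)|$, the block $R(k+1:N,1:n)$ being upper triangular with diagonal $r_{i+k,i}$.

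For part~1, recall from the proof of Theorem~\ref{teo:rep} that $L$ arises from $L^H\hat X=T$ with $T$ upper triangular and nonsingular in its leading $k\times k$ block, and that $\hat X(n+1:N,:)=-I_k$ by Theorem~\ref{theo:embedding}. Applying Lemma~\ref{prop-v} to $\hat X$, with $H=L^H$, then yields at once
\[
\prod_{i=1}^n|l_{i,i+k}|=\frac{|\det \hat X(n+1:N,:)|}{\prod_{i=1}^k\sigma_i(\hat X)}=\frac{1}{\prod_{i=1}^k\sigma_i(\hat X)}=K,
\]
the numerator being $|\det(-I_k)|=1$ (the singular values defining $K$ being those of the generator $\hat X$). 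The two--sided bound is immediate: every entry of the unitary matrix $L$ has modulus $\le 1$, so each $|l_{i,i+k}|\le 1$, and since the $n$ factors multiply to $K$ while each is $\le 1$, each one is also $\ge K$. Invariance is equally clean: by Theorem~\ref{teo-qr} a QR step sends $\hat X$ to $P^H\hat X$ with $P$ unitary, and left multiplication by a unitary matrix leaves the Gram matrix $\hat X^H\hat X$, hence $\prod_i\sigma_i(\hat X)$ and $K$, unchanged.

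For part~2, I would transport this information from $L$ to $R$ through the factorization. Writing, as in the proof of Theorem~\ref{teo:rep}, $R=Q^H(L^H\hat A-T\hat Y^H)$, and using that $Q^H$ is block diagonal with leading block $I_k$, that $T$ vanishes below its first $k$ rows, and that the first $n$ columns of $\hat A$ consist of $A$ stacked over a zero block (by~\eqref{hatr}), the restriction to rows $k+1:N$ and columns $1:n$ collapses to the corner identity $R(k+1:N,1:n)=\hat Q^H\,L^H(k+1:N,1:n)\,A$. Taking moduli of determinants, using $|\det\hat Q^H|=1$ and part~1, gives $\prod_{i=1}^n|r_{i+k,i}|=\bigl(\prod_{i=1}^n|l_{i,i+k}|\bigr)\,|\det A|=K\,|\det A|$. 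The bound $K|\det A|\le|r_{i+k,i}|\le 1$ follows from unitarity of $R$ exactly as for $L$, and constancy over QR steps holds because $K$ is constant and $A^{(1)}=P_1^HAP_1$ is unitarily similar to $A$, so $|\det A|$ does not change.

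I expect the corner identity for $R$ to be the genuinely delicate point: one must check, from $R=Q^H(L^H\hat A-T\hat Y^H)$, that the row/column restriction annihilates the $T\hat Y^H$ contribution and factors cleanly as $\hat Q^H L^H(k+1:N,1:n)A$. This is the same block bookkeeping that underlies the rank equalities of Theorem~\ref{teo:rep}, now carried out at the level of determinants rather than ranks. The remaining ingredients --- the triangular reading of the two outermost products, the $0$--$1$ bounds, and the two invariance claims --- are then routine.
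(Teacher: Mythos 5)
Your proof is correct and follows essentially the same route as the paper's: Lemma~\ref{prop-v}(2) applied to $\hat X$ (with $|\det(-I_k)|=1$) for the product over $L$, the Binet rule on the corner identity $L^H(k+1:N,1:n)\,A=\hat Q\, R(k+1:N,1:n)$ for the product over $R$, unitarity for the two-sided bounds, and invariance of the corner determinant and of the singular values under the block-unitary $P$ of Theorem~\ref{teo-qr} for constancy. Your parenthetical remark that $K$ is really $1/\prod_i\sigma_i(\hat X)$ matches what the paper's own proof actually establishes.
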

%
%
%
\begin{proof}
To prove part 1 we first observe that  $|\det(\hat X(n+1:N, :))|=1$, because $\hat X(n+1:N, :))=-I_k$ by construction.
To prove that the product of the outermost entries remains unchanged over QR steps,  we use Theorem~\ref{teo-qr} observing that $|\det(\hat X(n+1:N, :))|=|\det(\hat X_1((n+1:N, :))|$ and that  $\hat X$ and $\hat X_1$ have the same singular values.
We get the thesis applying part 2 of Lemma~\ref{prop-v}.

We can also see that $0<|l_{j,j+k}|\le 1$ and that $|l_{j,j+k}|=K/|\prod_{i=1, i\neq j}^{n} l_{i,i+k}|$.
Since $|\prod_{i=1, i\neq j}^{n} l_{i,i+k}|\le 1$ we have $|l_{j,j+k}|\ge K$.

The relation on $\prod_{i=1}^n |r_{i+k, i}|$ is similarly deduced by applying Binet rule to equality $L(k+1:N, 1:n)A=\hat Q R(k+1:N, 1:n)$.  After a QR step
the first $k$ rows of $V_1=L^{(1)H} \hat U^{(1)}$ are orthonormal  and, moreover,
the $k\times k$ submatrix in the right upper corner of $V_1$ satisfies
\begin{flalign}
	&|\det(V_1(1:k, n+1:N))|=\det(V_1(k+1:N, 1:n))|= && \nonumber \\ \nonumber
	&\hspace{2cm}=|\det(L^{(1)}(1:n, k+1:N))|\, |\det (A^{(1)})|=K |\det(A)|.&&
\end{flalign}
\qed
\end{proof}

\begin{remark}\label{rem:zrec}
  As observed in~\cite{AMVW15,AMRVW17} also for our representation it is possible to
  recover the structure of the $N\times k$ matrix $Z$ from the representation~\eqref{form:rapp}.
  In fact, we have $\hat A(n+1:N, :)=L(n+1:N, :)(Q+TZ^H)R=0_{k, N}$.
  Since $R$ is nonsingular, and $L(n+1:N, 1:k)=-T_k^{-1}$ we have that 
\begin{equation} \label{form:Z}
 Z^H = L( n+1:N, :) Q.
\end{equation}
%
%

\end{remark}
%
%
\section{The Algorithm} \label{sec:alg}
In this section we show how to carry out a single step of  Francis's
 implicitly shifted QR algorithm acting directly on the representation of the matrix described in Section~\ref{sec:rap}.
 In the sequel  we assume  $R$ to be a proper $k$-upper Hessenberg matrix. In the view of the previous sections this means
 that $A$ is nonsingular. If, otherwise, $A$ is singular then  we can perform a step of the QR algorithm with zero shift
 to remove the singularity.  In this way the parametrization of $R$ is automatically adjusted to specify a
 proper matrix in its active part. 

It is convenient to describe the representation and the algorithm using a pictorial representation already  introduced  in several papers (compare
with~\cite{R_book} and the references given therein).  Specifically, the action of a Givens rotation acting on two consecutive rows of the matrix is depicted as 
$
 \begin{array}{c}
\Rc\\
\rc\end{array}
$.
A chain of descending two-pointed arrows as below
$$
\begin{array}{c@{\hspace{1mm}}c@{\hspace{1mm}}c@{\hspace{1mm}}c@{\hspace{1mm}}c@{\hspace{1mm}}c@{\hspace{1mm}}c@{\hspace{1mm}}c@{\hspace{1mm}}c@{\hspace{1mm}}c@{\hspace{1mm}}c@{\hspace{1mm}}c@{\hspace{1mm}}c@{\hspace{1mm}}c@{\hspace{1mm}}c@{\hspace{1mm}}c@{\hspace{1mm}}c@{\hspace{1mm}}c}
\Rc	&&   &      &     \\
\rc	&\Rc&   &      \\
	&\rc &\Rc    &\\
	&    & \rc &\Rc& && \\
&	&   & \rc & \Rc\\
&	&   & &\rc \\

\end{array}
$$
represents a unitary upper Hessenberg matrix (in this case a $6\times 6$ matrix).
Vice versa, since any  unitary Hessenberg matrix $H$  of size $n$ can be factored as $H={\cal G}_1(c_1){\cal G}_2(c_2)\cdots {\cal G}_{n-1}(c_{n-1})\, D$, where ${\cal G}_k(c_k)=I_{k-1}\oplus G_k(c_k) \oplus I_{n-k-1}$, $G_k(c_k)=\left[ \begin{array}{cc} c_k &s_k\\-s_k& \bar c_k \end{array}\right]$, with $|c_k|^2+s_k^2=1, s_k\ge 0$, and $D$ is unitary diagonal, we have that $H$ can be represented as follows
$$
\begin{array}{c@{\hspace{1mm}}c@{\hspace{1mm}}c@{\hspace{1mm}}c@{\hspace{1mm}}c@{\hspace{1mm}}c@{\hspace{1mm}}c@{\hspace{2.5mm}}}
\Rc	&\times&   &      &     \\
\rc	&\Rc&\times &      \\
&\rc &\Rc    &\times\\
&    & \rc &\Rc&\times&& \\
&	&   & \rc &\Rc&\times\\
&	&   &  &\rc& &\times\\
\end{array}
$$
where the $\times$ represent the entries of the diagonal phase matrix $D$. Similarly the chain 
$$
\begin{array}{c@{\hspace{1mm}}c@{\hspace{1mm}}c@{\hspace{1mm}}c@{\hspace{1mm}}c@{\hspace{1mm}}c@{\hspace{1mm}}c@{\hspace{1mm}}c@{\hspace{1mm}}c@{\hspace{1mm}}c@{\hspace{1mm}}c@{\hspace{1mm}}c@{\hspace{1mm}}c@{\hspace{1mm}}c@{\hspace{1mm}}c@{\hspace{1mm}}c@{\hspace{1mm}}c@{\hspace{1mm}}c}
&	&&   &       \Rc    \\
 &   & &\Rc   &\rc      \\
&	&\Rc &\rc    &\\
&\Rc&\rc    &  && && \\
\Rc&\rc&	&   &  & \\
\rc&&	&   &  & \\
\end{array}
$$
represents  a unitary  lower Hessenberg matrix.
As observed in Remark~\ref{rem-h}  the $k$-Hessenberg matrices $L$ and $R$ appearing in the representation of $\hat A$ can be factored as the product of  $k$ unitary Hessenberg matrices, and any unitary Hessenberg can be represented through their Schur parametrization~\cite{Gragg} 
by ascending or descending chains of Givens rotations times a unitary diagonal matrix. In our case the unitary  diagonal matrices that would be necessary to get the Schur parametrization in terms of Givens factors, can all be accumulated  in the unitary factor $Q$. In the light of Theorem~\ref{teo:rep} the careful reader will not be surprised by the shape of the chains  of Givens rotations in the factorization of factors $L$ and $R$ where some of the Givens rotations are missing. Hence, using our pictorial representations 
we can exemplify the case $n=6, k=3$, $N=n+k=9$,  as follows 
$$
\hat{A}=
\underbrace{\begin{array}{c@{\hspace{1mm}}c@{\hspace{1mm}}c@{\hspace{1mm}}c@{\hspace{1mm}}c@{\hspace{1mm}}c@{\hspace{1mm}}c@{\hspace{1mm}}c@{\hspace{1mm}}c@{\hspace{1mm}}c@{\hspace{1mm}}c@{\hspace{1mm}}c@{\hspace{1mm}}c@{\hspace{1mm}}c@{\hspace{1mm}}c@{\hspace{1mm}}c@{\hspace{1mm}}c@{\hspace{1mm}}c}
 &&   &      &     &\Rc&  & \\
  &&   &      &\Rc&\rc& \Rc & \\
 & &    &\Rc&\rc  &\Rc   &\rc  &\Rc \\
 & &\Rc&\rc&   \Rc  & \rc  & \Rc &\rc \\
  &\Rc&\rc&   \Rc  & \rc  & \Rc &\rc \\
\Rc&\rc&   \Rc  & \rc  & \Rc &\rc \\
  \rc&   \Rc & \rc  & \Rc &\rc \\
     & \rc  & \Rc &\rc \\
      &   & \rc & \\
\end{array}}_{L} \;\;
\left(
\underbrace{\begin{array}{c@{\hspace{1mm}}c@{\hspace{1mm}}c@{\hspace{1mm}}c@{\hspace{1mm}}c@{\hspace{1mm}}c@{\hspace{1mm}}c@{\hspace{1mm}}c@{\hspace{1mm}}c@{\hspace{1mm}}c@{\hspace{1mm}}c@{\hspace{1mm}}c@{\hspace{1mm}}c@{\hspace{1mm}}c@{\hspace{1mm}}c@{\hspace{1mm}}c@{\hspace{1mm}}c@{\hspace{1mm}}c}
& & & &\cdot	& \cdot  &\cdot & \times& \times & \times& \times&\times & \times&  \\
&&&	&\cdot  &\cdot  & \cdot& \times& \times & \times& \times&\times & \times&   \\
&&&	&\cdot & \cdot& \cdot  & \times& \times & \times& \times&\times & \times& \\
&&	& & & \phantom{\Rc}&\Rc &\times &   & & &   \\
&&	& & & & \rc&\Rc & \times & & &   \\
&&	& & & & & \rc&\Rc &\times & &   \\
&&	& & & & &   &\rc &\Rc &\times   \\
&&	& & & & &   & &\rc &\Rc& \times  \\
&&	& & & & & &    & &\rc & &\times \\
	\end{array}}_{Q+TZ^H}\;\; 
\right) \underbrace{
\begin{array}{c@{\hspace{1mm}}c@{\hspace{1mm}}c@{\hspace{1mm}}c@{\hspace{1mm}}c@{\hspace{1mm}}c@{\hspace{1mm}}c@{\hspace{1mm}}c@{\hspace{1mm}}c@{\hspace{1mm}}c@{\hspace{1mm}}c@{\hspace{1mm}}c@{\hspace{1mm}}c@{\hspace{1mm}}c@{\hspace{1mm}}c@{\hspace{1mm}}c@{\hspace{1mm}}c@{\hspace{1mm}}c}
&&\Rc&&&&&&&\\
&\Rc&\rc&\Rc&&&&&&\\
\Rc&\rc&\Rc&\rc&\Rc&&&&&\\
\rc&\Rc&\rc&\Rc&\rc&\Rc&&&&\\
 &\rc&\Rc&\rc&\Rc&\rc&\Rc&&&\\
 &  &\rc&\Rc&\rc&\Rc&\rc&\Rc&&\\
 & &&\rc&\Rc&\rc&\Rc&\rc&\Rc&&\\
& & &&\rc&\Rc&\rc&\Rc&\rc&\Rc&&\\
&& & &&\rc&&\rc&&\rc&&&\\
\end{array}}_{R}
$$
where the central matrix can be expressed as
$$
\underbrace{
\begin{array}{c@{\hspace{1mm}}c@{\hspace{1mm}}c@{\hspace{1mm}}c@{\hspace{1mm}}c@{\hspace{1mm}}c@{\hspace{1mm}}c@{\hspace{1mm}}c@{\hspace{1mm}}c@{\hspace{1mm}}c@{\hspace{1mm}}c@{\hspace{1mm}}c@{\hspace{1mm}}c@{\hspace{1mm}}c@{\hspace{1mm}}c@{\hspace{1mm}}c@{\hspace{1mm}}c@{\hspace{1mm}}c}
& & & &\cdot	& \cdot  &\cdot & \times& \times & \times& \times&\times & \times&  \\
&&&	&\cdot  &\cdot  & \cdot& \times& \times & \times& \times&\times & \times&   \\
&&&	&\cdot & \cdot& \cdot  & \times& \times & \times& \times&\times & \times& \\
&&	& & & \phantom{\Rc}&\Rc &\times &   & & &   \\
&&	& & & & \rc&\Rc & \times & & &   \\
&&	& & & & & \rc&\Rc &\times & &   \\
&&	& & & & &   &\rc &\Rc &\times   \\
&&	& & & & &   & &\rc &\Rc& \times  \\
&&	& & & & & &    & &\rc & &\times \\
\end{array}}_{Q+TZ^H}=\underbrace{\begin{array}{c@{\hspace{1mm}}c@{\hspace{1mm}}c@{\hspace{1mm}}c@{\hspace{1mm}}c@{\hspace{1mm}}c@{\hspace{1mm}}c@{\hspace{1mm}}c@{\hspace{1mm}}c@{\hspace{1mm}}c@{\hspace{1mm}}c@{\hspace{1mm}}c@{\hspace{1mm}}c@{\hspace{1mm}}c@{\hspace{1mm}}c@{\hspace{1mm}}c@{\hspace{1mm}}c@{\hspace{1mm}}c}
			&	&\times & & & & &  & &  \\
			&	&  &\times & &&   & & & &   \\
			&	& & &\times & &  & & & & & \\
			& & & \phantom{\Rc}&\Rc &\times &   & & &   \\
			& & & & \rc&\Rc & \times & & &   \\
			& & & & & \rc&\Rc &\times & &   \\
			& & & & &   &\rc &\Rc &\times   \\
			& & & & &   & &\rc &\Rc& \times  \\
			& & & & & &    & &\rc & &\times \\
			\end{array}}_{Q}\;\; 
		+ \underbrace{ \begin{array}{c@{\hspace{1mm}}c@{\hspace{1mm}}c@{\hspace{1mm}}c@{\hspace{1mm}}c@{\hspace{1mm}}c@{\hspace{1mm}}c@{\hspace{1mm}}c@{\hspace{1mm}}c@{\hspace{1mm}}c@{\hspace{1mm}}c@{\hspace{1mm}}c@{\hspace{1mm}}c@{\hspace{1mm}}c@{\hspace{1mm}}c@{\hspace{1mm}}c@{\hspace{1mm}}c@{\hspace{1mm}}c}
			\times& & & \times&\times& \times&  \times&\times &\times  \\
			&\times & & \times&\times& \times&  \times&\times &\times  \\
			& &\times & \times&\times& \times&  \times&\times &\times  \\
			&   &   &  &  &   &    &   &   \\
			&   &   &  &  &   &    &   &   \\
			&   &   &  &  &   &    &   &   \\
			&   &   &  &  &   &    &   &   \\
			&   &   &  &  &   &    &   &   \\
			&   &   &  &  &   &    &   &   \\
			\end{array}}_{TZ^H}
$$
and the $\cdot$ represent zeros. These zeros are obtained summing the contribution of the $k\times k$ principal blocks of $Q$ and of $TZ^H$ which sums up to zero. 
We have used the fact that 
$$
Q=\left[ \begin{array}{c|c}
I_k &\\ \hline
& \hat Q
\end{array}\right] D= {\cal G}_{k+1}\cdots {\cal G}_{N-1} \hat D,
$$ 
wherte ${\cal G}_i$ are Givens matrices acting on rows $i, i+1$ and $\hat D$ is a unitary diagonal matrix.  Furthermore,
in the lower left corner of the Schur parametrization of  $L$ we have trivial Givens rotations since  $ X(n+1:N, :)=-I_k$. The description of the bulge chasing algorithm in Section~\ref{sec:bulge-chasing} will make it clear why this structure is not  modified. 

Givens transformations can also interact with each other by means of the {\it fusion} or the {\it turnover} operations
(see~\cite{Raf_book}, pp.112-115).  The fusion operation will be depicted as follows:
\begin{equation*}
\begin{array}{cccc}
& \Rc\ca & \Rc  \\[-0.05cm]  
& \rc & \rc & \\[-0.05cm] 
\end{array}
\mbox{  resulting in  }
\begin{array}{cccc}
& \Rc   \\[-0.05cm]  
&\rc  \\[-0.05cm] 
\end{array},
\end{equation*}
and consists of the concatenation of two Givens transformations acting on the same rows.
The turnover operation allows to rearrange the order of some Givens trans\-for\-ma\-tions (see~\cite{Raf_book}).  

Graphically we will depict this rearrangement of Givens transformations as follows:
\begin{equation*}
\begin{array}{ccccc}
\Rc & \stb&\Rc  \\[-0.05cm]  
\rc & \Rc &\rc & \\[-0.05cm]
& \rc &     &  \\[-0.05cm] 
\end{array}\quad
\to
\quad
\begin{array}{ccccc}
&     & \Rc &  \\[-0.05cm]  
& \Rc & \rc & \Rc & \\[-0.05cm]
& \rc & & \rc &  \\[-0.05cm] 
\end{array}\qquad
\mbox{  or }
\quad
\begin{array}{cccccc}
&   & \Rc &  \\[-0.05cm]  
& \Rc & \rc & \Rc & \\[-0.05cm]
& \rc &$\rotatebox[origin=m]{180}{$\curvearrowleft$}$& \rc &  \\[-0.05cm] 
\end{array}
\quad
\to
\quad
\begin{array}{ccccc}
\Rc &  &\Rc  \\[-0.05cm]  
\rc & \Rc &\rc & \\[-0.05cm]
& \rc &     &  \\[-0.05cm] 
\end{array}.
\end{equation*}
$$
\begin{array}{ccccc}
\Rc & \slb&\Rc  \\[-0.05cm]  
\rc & \Rc &\rc & \\[-0.05cm]
& \rc &     &  \\[-0.05cm] 
\end{array}\quad
\to
\quad
\begin{array}{ccccc}
&     & \Rc &  \\[-0.05cm]  
& \Rc & \rc & \Rc & \\[-0.05cm]
& \rc & & \rc &  \\[-0.05cm] 
\end{array}\qquad
\mbox{  or }
\quad
\begin{array}{cccccc}
&   & \Rc &  \\[-0.05cm]  
& \Rc & \rc & \Rc & \\[-0.05cm]
& \rc &$\rotatebox[origin=m]{180}{$\curvearrowright$}$& \rc &  \\[-0.05cm] 
\end{array}
\quad
\to
\quad
\begin{array}{ccccc}
\Rc &  &\Rc  \\[-0.05cm]  
\rc & \Rc &\rc & \\[-0.05cm]
& \rc &     &  \\[-0.05cm] 
\end{array}.
$$

When we apply a sequence of $r$  consecutive turnover operations we will use the same symbol surmounted by the number of turnovers such as  $\stbt{r}$.

Each fusion and  turnover has a constant cost since consists in the operations involving  $2\times 2$ or $3\times 3$ matrices. Note that while the fusion between two Givens rotations can result in a trivial rotation, this is never the case when we perform a turnover between three non-trivial rotations.

\subsection{Initialization and bulge chasing}\label{sec:bulge-chasing}

As observed in Remark~\ref{rem:zrec} we do not have to perform the Givens transformations on the rank $k$ part since the matrix $Z$ can be  recovered at the end of the QR process and the  matrix $T$ is not affected by the transformations which act on rows $k+1$ to $N$. As we will explain in Section~\ref{sec:back} we prefer to  store explicitly the vectors $Z$ rather then recovering them at the end of the process because in this way we are able to prove a tighter bound for the  backward error of the method. 

The implicit QR algorithm starts with the computation of the shift.
Using a Wilkinson shift strategy we need to reconstruct
 the $2\times 2$ lower-right hand corner of $\hat A$.
This can be done by  operating on the representation and it  requires $O(k)$ flops. Once the
shift $\mu$ is computed, we retrieve the first two components of  the first column of
$\hat A$, i.e., $\hat a_{11}, \hat a_{21}$ and we compute the $2\times 2$ Givens rotation $G_1$ such that
$$
G_1\left[\begin{array}{c}\hat a_{11}-\mu\\ \hat a_{21}\end{array}\right]=\left[\begin{array}{c}\times\\ 0\end{array}\right].
$$
Let ${\cal G}_1=G_1\oplus I_{N-2}$, we have that matrix ${\cal G}_1 \hat A {\cal G}_1^H$  becomes
  
$$
	\underbrace{\begin{array}{r@{\hspace{1mm}}c@{\hspace{1mm}}c@{\hspace{-1mm}}r@{\hspace{1mm}}c@{\hspace{1mm}}c@{\hspace{1mm}}c@{\hspace{1mm}}c@{\hspace{1mm}}c@{\hspace{1mm}}c@{\hspace{1mm}}c@{\hspace{1mm}}c@{\hspace{1mm}}c@{\hspace{1mm}}c@{\hspace{1mm}}c@{\hspace{1mm}}c@{\hspace{1mm}}c@{\hspace{1mm}}c}
		& & & {G}_1&  \\
	&&   &      \Rc &\stbt{3} &\Rc&  & \\
		&&  &     \rc &\Rc&\rc& \Rc & \\
		& &    &\Rc&\rc  &\Rc   &\rc  &\Rc \\
		& &\Rc&\rc&   \Rc  & \rc  & \Rc &\rc \\
		&\Rc&\rc&   \Rc  & \rc  & \Rc &\rc \\
		\Rc&\rc&   \Rc  & \rc  & \Rc &\rc \\
		\rc&   \Rc & \rc  & \Rc &\rc \\
		& \rc  & \Rc &\rc \\
		&   & \rc & \\
		\end{array}}_{{\cal G}_1L} \;\; \left(
	\underbrace{\begin{array}{c@{\hspace{1mm}}c@{\hspace{1mm}}c@{\hspace{1mm}}c@{\hspace{1mm}}c@{\hspace{1mm}}c@{\hspace{1mm}}c@{\hspace{1mm}}c@{\hspace{1mm}}c@{\hspace{1mm}}c@{\hspace{1mm}}c@{\hspace{1mm}}c@{\hspace{1mm}}c@{\hspace{1mm}}c@{\hspace{1mm}}c@{\hspace{1mm}}c@{\hspace{1mm}}c@{\hspace{1mm}}c}
		& & & & & & & & & &&&&\\
		 & & & &\cdot	& \cdot  &\cdot & \times& \times & \times& \times&\times & \times&  \\
&&&	&\cdot  &\cdot  & \cdot& \times& \times & \times& \times&\times & \times&   \\
&&&	&\cdot & \cdot& \cdot  & \times& \times & \times& \times&\times & \times& \\
                    &  &  &  &  & \phantom{\Rc} & \Rc & \times &        &        &        &  \\
		                    &  &  &  &  &               & \rc &  \Rc   & \times &        &        &  \\
		                    &  &  &  &  &               &     &  \rc   &  \Rc   & \times &        &  \\
		                    &  &  &  &  &               &     &        &  \rc   &  \Rc   & \times &  \\
		                    &  &  &  &  &               &     &        &        &  \rc   &  \Rc   & \times &  \\
		                    &  &  &  &  &               &     &        &        &        &  \rc   &        & \times &
	\end{array}}_{Q+TZ^H}\;\; 
	\right) \underbrace{
		\begin{array}{c@{\hspace{1mm}}c@{\hspace{1mm}}c@{\hspace{1mm}}c@{\hspace{1mm}}c@{\hspace{1mm}}c@{\hspace{1mm}}c@{\hspace{1mm}}c@{\hspace{1mm}}c@{\hspace{1mm}}l@{\hspace{1mm}}c@{\hspace{1mm}}c@{\hspace{1mm}}c@{\hspace{1mm}}c@{\hspace{1mm}}l@{\hspace{1mm}}l@{\hspace{1mm}}l@{\hspace{1mm}}}
		&&   &&&&&&& {G}_1^H&  \\
		&&\Rc&&&&&&&\Rc \\
		&\Rc&\rc&\Rc&&&&&&\rc\\
		\Rc&\rc&\Rc&\rc&\Rc&&&&&\\
		\rc&\Rc&\rc&\Rc&\rc&\Rc&&&&\\
		&\rc&\Rc&\rc&\Rc&\rc&\Rc&&&\\
		&  &\rc&\Rc&\rc&\Rc&\rc&\Rc&&\\
		& &&\rc&\Rc&\rc&\Rc&\rc&\Rc&&\\
		& & &&\rc&\Rc&\rc&\Rc&\rc&\Rc&&\\
		&& & &&\rc&&\rc&&\rc&&&\\
		\end{array}}_{R{\cal G}_1^H}
$$

Applying a series of $k$ turnovers operations we can pass $G_1$ through the ascending sequence of Givens transformations, and a new  Givens transformation $G_{k+1}$ acting on rows $k+1$ and $k+2$, will appear before the bracket, and then is fused with the first nontrivial rotation of $Q$.     
$$
\underbrace{\begin{array}{r@{\hspace{1mm}}c@{\hspace{1mm}}c@{\hspace{1mm}}c@{\hspace{1mm}}c@{\hspace{1mm}}c@{\hspace{1mm}}c@{\hspace{1mm}}c@{\hspace{1mm}}r@{\hspace{1mm}}c@{\hspace{1mm}}c@{\hspace{1mm}}c@{\hspace{1mm}}c@{\hspace{1mm}}c@{\hspace{1mm}}c@{\hspace{1mm}}c@{\hspace{1mm}}c@{\hspace{1mm}}c}
		& & &&&&&\\
			&&   &      &     &\Rc&  & \\
		&&  &      &\Rc&\rc& \Rc & \\
		& &    &\Rc&\rc  &\Rc   &\rc  &\Rc \\
		& &\Rc&\rc&   \Rc  & \rc  & \Rc &\rc & \\
		&\Rc&\rc&   \Rc  & \rc  & \Rc &\rc & &\\
		\Rc&\rc&   \Rc  & \rc  & \Rc &\rc \\
		\rc&   \Rc & \rc  & \Rc &\rc \\
		& \rc  & \Rc &\rc \\
		&   & \rc & \\
		\end{array}}_{\bar L} \;\; \left(
		\underbrace{\begin{array}{c@{\hspace{1mm}}c@{\hspace{1mm}}c@{\hspace{1mm}}c@{\hspace{1mm}}c@{\hspace{1mm}}c@{\hspace{1mm}}c@{\hspace{1mm}}c@{\hspace{1mm}}c@{\hspace{1mm}}c@{\hspace{1mm}}c@{\hspace{1mm}}c@{\hspace{1mm}}c@{\hspace{1mm}}c@{\hspace{1mm}}c@{\hspace{1mm}}c@{\hspace{1mm}}c@{\hspace{1mm}}c}
		& & & & & & & & & &&&&\\
		& & & &\cdot	& \cdot  &\cdot & \times& \times & \times& \times&\times & \times&  \\
&&&	&\cdot  &\cdot  & \cdot& \times& \times & \times& \times&\times & \times&   \\
&&&	&\cdot & \cdot& \cdot  & \times& \times & \times& \times&\times & \times& \\
		&  &  &  &  & \Rc\ca & \Rc & \times &        &        &        &  \\
		&  &  &  &  &    \rc           & \rc &  \Rc   & \times &        &        &  \\
		&  &  &  &  &               &     &  \rc   &  \Rc   & \times &        &  \\
		&  &  &  &  &               &     &        &  \rc   &  \Rc   & \times &  \\
		&  &  &  &  &               &     &        &        &  \rc   &  \Rc   & \times &  \\
		&  &  &  &  &               &     &        &        &        &  \rc   &        & \times &
		\end{array}}_{{\cal G}_{k+1}Q+TZ^H}\;\; 
	\right) \underbrace{
		\begin{array}{c@{\hspace{1mm}}c@{\hspace{1mm}}c@{\hspace{1mm}}c@{\hspace{-1mm}}c@{\hspace{-1mm}}l@{\hspace{1mm}}c@{\hspace{1mm}}c@{\hspace{1mm}}c@{\hspace{1mm}}l@{\hspace{1mm}}c@{\hspace{1mm}}c@{\hspace{1mm}}c@{\hspace{1mm}}c@{\hspace{1mm}}l@{\hspace{1mm}}l@{\hspace{1mm}}l@{\hspace{1mm}}}
	   	&   &   &        & {G}_1^H&  \\
	    &   &\Rc&\slbt{3}&\Rc&&&&\\
		&\Rc&\rc&\Rc&\rc&&&&&\\
		\Rc&\rc&\Rc&\rc&\Rc&&&&&\\
		\rc&\Rc&\rc&\Rc&\rc&\Rc&&&&\\
		&\rc&\Rc&\rc&\Rc&\rc&\Rc&&&\\
		&  &\rc&\Rc&\rc&\Rc&\rc&\Rc&&\\
		& &&\rc&\Rc&\rc&\Rc&\rc&\Rc&&\\
		& & &&\rc&\Rc&\rc&\Rc&\rc&\Rc&&\\
		&& & &&\rc&&\rc&&\rc&&&\\
		\end{array}}_{R{\cal G}_1^H}
$$

Similarly the Givens rotation $G_1^H$ on the right  is shifted through the sequence of Givens transformations representing $R$ and  applied to the columns of $Z^H$ and on the right of ${\cal G}_{k+1}  Q$. Then another turnover operation is applied giving 
$$
	\underbrace{\begin{array}{r@{\hspace{1mm}}c@{\hspace{1mm}}c@{\hspace{1mm}}c@{\hspace{1mm}}c@{\hspace{1mm}}c@{\hspace{1mm}}c@{\hspace{1mm}}l@{\hspace{1mm}}c@{\hspace{1mm}}c@{\hspace{1mm}}c@{\hspace{1mm}}c@{\hspace{1mm}}c@{\hspace{1mm}}c@{\hspace{1mm}}c@{\hspace{1mm}}c@{\hspace{1mm}}c@{\hspace{1mm}}c}
		&&   &      &     &\Rc&  & \\
		&&  &      &\Rc&\rc& \Rc & \\
		& &    &\Rc&\rc  &\Rc   &\rc  &\Rc \\
		& &\Rc&\rc&   \Rc  & \rc  & \Rc &\rc & \\
		&\Rc&\rc&   \Rc  & \rc  & \Rc &\rc &\Rc &\\
		\Rc&\rc&   \Rc  & \rc  & \Rc &\rc &\sltt{3} &\rc G_{k+2} & \\
		\rc&   \Rc & \rc  & \Rc &\rc & & & \\
		& \rc  & \Rc &\rc \\
		&   & \rc & \\
		\end{array}}_{\bar L G_{k+2}} \;\; \left(
		\underbrace{\begin{array}{c@{\hspace{1mm}}c@{\hspace{1mm}}c@{\hspace{1mm}}c@{\hspace{1mm}}c@{\hspace{1mm}}c@{\hspace{1mm}}c@{\hspace{1mm}}c@{\hspace{1mm}}c@{\hspace{1mm}}c@{\hspace{1mm}}c@{\hspace{1mm}}c@{\hspace{1mm}}c@{\hspace{1mm}}c@{\hspace{1mm}}c@{\hspace{1mm}}c@{\hspace{1mm}}c@{\hspace{1mm}}c}
		& & & &\cdot	& \cdot  &\cdot & \times& \times & \times& \times&\times & \times&  \\
&&&	&\cdot  &\cdot  & \cdot& \times& \times & \times& \times&\times & \times&   \\
&&&	&\cdot & \cdot& \cdot  & \times& \times & \times& \times&\times & \times& \\
&  &  &  &  & \phantom{\Rc} & \Rc & \times &        &        &        &  \\
		&  &  &  &  &               & \rc &  \Rc   & \times &        &        &  \\
		&  &  &  &  &               &     &  \rc   &  \Rc   & \times &        &  \\
		&  &  &  &  &               &     &        &  \rc   &  \Rc   & \times &  \\
		&  &  &  &  &               &     &        &        &  \rc   &  \Rc   & \times &  \\
		&  &  &  &  &               &     &        &        &        &  \rc   &        & \times &
		\end{array}}_{Q_2+TZ_2^H}\;\; 
	\right) \underbrace{
		\begin{array}{c@{\hspace{1mm}}c@{\hspace{1mm}}c@{\hspace{1mm}}c@{\hspace{1mm}}c@{\hspace{1mm}}c@{\hspace{1mm}}c@{\hspace{1mm}}c@{\hspace{1mm}}c@{\hspace{1mm}}l@{\hspace{1mm}}c@{\hspace{1mm}}c@{\hspace{1mm}}c@{\hspace{1mm}}c@{\hspace{1mm}}l@{\hspace{1mm}}l@{\hspace{1mm}}l@{\hspace{1mm}}}
		&&\Rc&&&&&&&\\
		&\Rc&\rc&\Rc&&&&&&\\
		\Rc&\rc&\Rc&\rc&\Rc&&&&&\\
		\rc&\Rc&\rc&\Rc&\rc&\Rc&&&&\\
		&\rc&\Rc&\rc&\Rc&\rc&\Rc&&&\\
		&  &\rc&\Rc&\rc&\Rc&\rc&\Rc&&\\
		& &&\rc&\Rc&\rc&\Rc&\rc&\Rc&&\\
		& & &&\rc&\Rc&\rc&\Rc&\rc&\Rc&&\\
		&& & &&\rc&&\rc&&\rc&&&\\
		\end{array}}_{ R_2}
$$
At the end of the initialization phase the Givens rotation $G_{k+2}$ on the right of $\bar L$ can be brought on the left giving rise to the bulge represented by a Givens rotation $G_2$ acting on rows 2 and 3, namely $\bar L G_{k+2}=G_2^H L_2$. We have
$$
	\underbrace{\begin{array}{r@{\hspace{1mm}}c@{\hspace{1mm}}c@{\hspace{1mm}}c@{\hspace{1mm}}c@{\hspace{1mm}}c@{\hspace{1mm}}c@{\hspace{1mm}}l@{\hspace{1mm}}c@{\hspace{1mm}}c@{\hspace{1mm}}c@{\hspace{1mm}}c@{\hspace{1mm}}c@{\hspace{1mm}}c@{\hspace{1mm}}c@{\hspace{1mm}}c@{\hspace{1mm}}c@{\hspace{1mm}}c}
		&&   &      &     &\Rc&  & \\
		G_2^H&\Rc&  &      &\Rc&\rc& \Rc & \\
		&\rc &    &\Rc&\rc  &\Rc   &\rc  &\Rc \\
		& &\Rc&\rc&   \Rc  & \rc  & \Rc &\rc & \\
		&\Rc&\rc&   \Rc  & \rc  & \Rc &\rc & &\\
    \Rc&\rc&   \Rc  & \rc  & \Rc &\rc & & & \\
		\rc&   \Rc & \rc  & \Rc &\rc & & & \\
		& \rc  & \Rc &\rc \\
		&   & \rc & \\
		\end{array}}_{G_2^H L_2} \;\; \left(
	\underbrace{\begin{array}{c@{\hspace{1mm}}c@{\hspace{1mm}}c@{\hspace{1mm}}c@{\hspace{1mm}}c@{\hspace{1mm}}c@{\hspace{1mm}}c@{\hspace{1mm}}c@{\hspace{1mm}}c@{\hspace{1mm}}c@{\hspace{1mm}}c@{\hspace{1mm}}c@{\hspace{1mm}}c@{\hspace{1mm}}c@{\hspace{1mm}}c@{\hspace{1mm}}c@{\hspace{1mm}}c@{\hspace{1mm}}c}
		& & & &\cdot	& \cdot  &\cdot & \times& \times & \times& \times&\times & \times&  \\
&&&	&\cdot  &\cdot  & \cdot& \times& \times & \times& \times&\times & \times&   \\
&&&	&\cdot & \cdot& \cdot  & \times& \times & \times& \times&\times & \times& \\
		&  &  &  &  & \phantom{\Rc} & \Rc & \times &        &        &        &  \\
		&  &  &  &  &               & \rc &  \Rc   & \times &        &        &  \\
		&  &  &  &  &               &     &  \rc   &  \Rc   & \times &        &  \\
		&  &  &  &  &               &     &        &  \rc   &  \Rc   & \times &  \\
		&  &  &  &  &               &     &        &        &  \rc   &  \Rc   & \times &  \\
		&  &  &  &  &               &     &        &        &        &  \rc   &        & \times &
		\end{array}}_{Q_2+TZ_2^H}\;\;
	\right) \underbrace{
		\begin{array}{c@{\hspace{1mm}}c@{\hspace{1mm}}c@{\hspace{1mm}}c@{\hspace{1mm}}c@{\hspace{1mm}}c@{\hspace{1mm}}c@{\hspace{1mm}}c@{\hspace{1mm}}c@{\hspace{1mm}}l@{\hspace{1mm}}c@{\hspace{1mm}}c@{\hspace{1mm}}c@{\hspace{1mm}}c@{\hspace{1mm}}l@{\hspace{1mm}}l@{\hspace{1mm}}l@{\hspace{1mm}}}
		&&\Rc&&&&&&&\\
		&\Rc&\rc&\Rc&&&&&&\\
		\Rc&\rc&\Rc&\rc&\Rc&&&&&\\
		\rc&\Rc&\rc&\Rc&\rc&\Rc&&&&\\
		&\rc&\Rc&\rc&\Rc&\rc&\Rc&&&\\
		&  &\rc&\Rc&\rc&\Rc&\rc&\Rc&&\\
		& &&\rc&\Rc&\rc&\Rc&\rc&\Rc&&\\
		& & &&\rc&\Rc&\rc&\Rc&\rc&\Rc&&\\
		&& & &&\rc&&\rc&&\rc&&&\\
		\end{array}}_{ R_2}
$$

 The bulge  needs to be chased down. 

At this point we have ${\cal G}_1 \hat A{\cal G}_1^H={\cal G}_2^H  L_2(Q_2+T Z_2^H) R_2$, where ${\cal G}_2=1\oplus G_2\oplus I_{N-3}$. Performing a similarity transformation to get rid of   ${\cal G}_2^H$, we have that the  matrix $G_2^H$ on the right can be brought to the left applying turnover operations. Repeating the same reasoning $n-1$ times, we have  finally to remove a Givens rotation acting on columns $n-1$ and $n$. 

$$
\underbrace{\begin{array}{r@{\hspace{1mm}}c@{\hspace{1mm}}c@{\hspace{1mm}}c@{\hspace{1mm}}c@{\hspace{1mm}}c@{\hspace{1mm}}c@{\hspace{1mm}}c@{\hspace{1mm}}r@{\hspace{1mm}}c@{\hspace{1mm}}c@{\hspace{1mm}}c@{\hspace{1mm}}c@{\hspace{1mm}}c@{\hspace{1mm}}c@{\hspace{1mm}}c@{\hspace{1mm}}c@{\hspace{1mm}}c}
			&&   &      &     &\Rc&  & \\
		&&  &      &\Rc&\rc& \Rc & \\
		& &    &\Rc&\rc  &\Rc   &\rc  &\Rc \\
		& &\Rc&\rc&   \Rc  & \rc  & \Rc &\rc & \\
		&\Rc&\rc&   \Rc  & \rc  & \Rc &\rc & &\\
		\Rc&\rc&   \Rc  & \rc  & \Rc &\rc \\
		\rc&   \Rc & \rc  & \Rc &\rc \\
		& \rc  & \Rc &\rc \\
		&   & \rc & \\
		\end{array}}_{L_{n-1}} \;\; \left(
	\underbrace{\begin{array}{c@{\hspace{1mm}}c@{\hspace{1mm}}c@{\hspace{1mm}}c@{\hspace{1mm}}c@{\hspace{1mm}}c@{\hspace{1mm}}c@{\hspace{1mm}}c@{\hspace{1mm}}c@{\hspace{1mm}}c@{\hspace{1mm}}c@{\hspace{1mm}}c@{\hspace{1mm}}c@{\hspace{1mm}}c@{\hspace{1mm}}c@{\hspace{1mm}}c@{\hspace{1mm}}c@{\hspace{1mm}}c}
		& & & &\cdot	& \cdot  &\cdot & \times& \times & \times& \times&\times & \times&  \\
&&&	&\cdot  &\cdot  & \cdot& \times& \times & \times& \times&\times & \times&   \\
&&&	&\cdot & \cdot& \cdot  & \times& \times & \times& \times&\times & \times& \\
&  &  &  &  & \phantom{\Rc} & \Rc & \times &        &        &        &  \\
		&  &  &  &  &               & \rc &  \Rc   & \times &        &        &  \\
		&  &  &  &  &               &     &  \rc   &  \Rc   & \times &        &  \\
		&  &  &  &  &               &     &        &  \rc   &  \Rc   & \times &  \\
		&  &  &  &  &               &     &        &        &  \rc   &  \Rc   & \times &  \\
		&  &  &  &  &               &     &        &        &        &  \rc   &        & \times &
		\end{array}}_{Q_{n-1}+TZ_{n-1}^H}\;\;
	\right) \underbrace{
		\begin{array}{c@{\hspace{1mm}}c@{\hspace{1mm}}c@{\hspace{1mm}}c@{\hspace{1mm}}c@{\hspace{1mm}}c@{\hspace{1mm}}c@{\hspace{1mm}}c@{\hspace{1mm}}c@{\hspace{1mm}}l@{\hspace{1mm}}c@{\hspace{1mm}}
			c@{\hspace{1mm}}
			c@{\hspace{1mm}}c@{\hspace{1mm}}l@{\hspace{1mm}}l@{\hspace{1mm}}l@{\hspace{1mm}}
		}
			                    & \Rc & \rc & \Rc &     &     &     &     &          &  \\
			        \Rc         & \rc & \Rc & \rc & \Rc &     &     &     &          &  \\
			        \rc         & \Rc & \rc & \Rc & \rc & \Rc &     &     &          & G_{n-1}^H &  \\
			                    & \rc & \Rc & \rc & \Rc & \rc & \Rc &     & \slbt{3} & \Rc       &  \\
			                    &     & \rc & \Rc & \rc & \Rc & \rc & \Rc &          & \rc       &  &  \\
			                    &     &     & \rc & \Rc & \rc & \Rc & \rc &   \Rc    &           &  \\
			                    &     &     &     & \rc & \Rc & \rc & \Rc &   \rc    & \Rc       &  &  \\
			                    &     &     &     &     & \rc &     & \rc &          & \rc       &  &  &
		\end{array}}_{R_{n-1}{\cal G}_{n-1}^H}
$$
With the application of $k$ turnover operations, we get that $R_{n-1}{\cal G}_{n-1}^H={\cal G}_{n+k-1} R_n$, where ${\cal G}_{n+k-1}=I_{N-2}\oplus G_{N-1}$.  The Givens rotation $G_{N-1}$ acts on the last two columns and  will modify the last two columns of $Z_{n-1}^H$ and then fuses with matrix $Q_{n-1}$. At this point the Hessenberg structure is restored, and the implicit step ends. 

The graphical representation of the algorithm corresponds to the following updating of the matrices involved in the representation for suitable $P, S, V$ 

$$L^{(1)}=P^H L S,\quad Q^{(1)}
=S^HQV, \quad T^{(1)}=S^HT,
$$
$$Z^{(1)}=V^H Z,\mbox{ and } R^{(1)}=V^HRP.$$

In particular  in $P$ are gathered the $n-1$ rotations needed to restore the Hessenberg structure of  the full matrix, so that there are no operations involving the last $k$ rows of $L$, meaning that we can assume  $P_2=P(n+1:N, n+1:N)=I_k$.  $S$ is the product of the Givens rotations that have shifted through the factor $L$ when turnover operations are performed, and similarly $V$ is the product of the Givens matrices shifted through $R$ from the right. 

To show that this corresponds actually to a QR step it is sufficient to verify that we are under the hypothesis of Theorem~\ref{teo:rep1}, i.e., that $L^{(1)}$ and $R^{(1)}$ are unitary $k$-Hessenberg matrices, $T^{(1)}$ is still of the form $T^{(1)} =[T_k^T, 0]^T$ and that $Z^{(1)}$ has the structure described in point 2 of Theorem~\ref{teo:rep1}.  
From the description of the algorithm we can see that the matrices $S$ and $V$ are block diagonal with the leading block of size $k$ equal to the identity matrix since  the turnover operations shift down of  $k$ rows the rotations acting on rows and columns of $L$ and $R$ respectively.  We note that at the end of the chasing steps the $k$-Hessenberg structure of  $L^{(1)}$ and $R^{(1)}$ is restored, and $T^{(1)}=[T_k^H, 0]^T$ because $S(1:k, 1:k)=I_k$. Moreover, $L^{(1)}$ is still proper since the turnover operations cannot introduce trivial rotations. Matrix $Q^{(1)}$ is still block-diagonal  with the leading block $k\times k$ unitary diagonal, and the tailing block with Hessenberg structure. 
For $Z^{(1)}=V^HZ$ we need to prove that $Z^{(1)H}=L^{(1)}(n+1:N, :)Q^{(1)}$. From~\eqref{P}, and observing that $P_2=I_k$ we have  $L^{(1)}(n+1:N, :)= L(n+1:N, :)S$. 
Substituting we get 
\begin{eqnarray}
\label{eq:zespl}
Z^{(1)H}&=Z^H V= L(n+1:N, :)QV=  L(n+1:N, :)S S^HQV\\ \nonumber
&=L^{(1)}(n+1:N, :)Q^{(1)}
\end{eqnarray}
as required. 
To apply the implicit Q-Theorem we need to observe  that the first column of $A$ is only affected by the first rotation during the initialization step and is never changed after that.
%
%
\subsection{Computational cost}

The reduction of a generic  matrix to Hessenberg form requires in the general case $O(n^3)$ flops,  but as we observed in the introduction, in special cases the reduction can be achieved with $O(n^2k)$ operations. In~\cite{BDCG_TR}  is proposed  an $O(n^2k)$ algorithm to reduce a unitary-plus-low-rank matrix to Hessenberg form  and obtain directly the $LFR$ factorization suitable as starting point of the QR method we just described in this paper. The algorithm in~\cite{BDCG_TR} can be applied when
\begin{enumerate}
\item $A=D + U V^H$,   $U, V\in \mathbb C^{n\times k}$, and
$D$ is unitary  block  diagonal  with block size $k<n$.
\item $A=H + [I_k, 0]^T Z^H$,   $Z\in \mathbb C^{n\times k}$, and
$H$ is unitary  block  upper Hessenberg  with block size $k<n$;
\item $A=G + [I_k, 0]^T Z^H$,   $Z\in \mathbb C^{n\times k}$, and
$G$ is unitary  block CMV   with block size $k<n$;
\end{enumerate}
These three cases cover the most interesting structures of low-rank perturbation of unitary matrices. In the general case of unitary matrices, where the spectral factorization of the unitary part is not known, in general we cannot expect to recover the eigenvalues  even of the unitary part in $o(n^3)$. 

 Unitary matrices are always diagonalizable, so we fall in case (1) if we know the eigen-decomposition of the unitary part.   Block companion  matrices belong to case (2), and applying the algorithm in~\cite{BDCG_TR} to reduce them in Hessenberg form we get directly the factored representation.

We assume hence that $A$ is already in Hessenberg form and we know that the embedding preserves this structure. If we are not in cases (1)-(3) it is necessary to compute the matrices required 
for embedding  $A$ in $\hat A$ which  can be performed using  $O(n^2k)$ operations.
Similarly the cost of the  representation is $O(n^2k)$ operations, since we need to compute $O(nk)$ Givens rotations and apply them to $N\times N$ matrices.

The key ingredients of the algorithm are turnover or fusion operations. Each of such operations can be performed with a bounded number of operations since they involve only $2\times 2$ or $3\times 3$ matrices. Each QR step consists of an inizialization phase, requiring $3k+1$ turnovers and a fusion as well as the updating of two rows of the $n\times k$ matrix $Z$ in the case we choose to update it at each step.
Each of the remaining $n-2$ chasing steps consists of $2k+1$ turnovers and the possible update of $Z$.
In the final step we have $k$ turnovers and a fusion with the last Givens rotation in $Q$.
 Overall the cost of an implicit QR step is $O(nk)$, and assuming as usual that deflation happens after a constant number of steps, we get an overall cost of $O(n^2k)$ arithmetic operations for retrieving all the eigenvalues. Comparing  the cost of this algorithm with the cost of the unstructured QR method,   which requires $O(n^2)$ flops per iteration and then  a total cost of $O(n^3)$, shows the advantage of using this approach.

\subsection{Deflation}

Deflation  techniques  are based on equation~\eqref{adefl} which shows that the possibility of  performing deflation can be recognized by direct inspection on the representation without re\-con\-struc\-ting the matrix $A$. In practice it is equivalent to check the subdiagonal entries of the factor $Q$.

\begin{lemma}\label{lem:defcrit}
Assume that the QR iteration applied to the matrix $\hat A$ is convergent to an upper triangular matrix. Denote by $\hat A^{(s)}=L^{(s)}(Q^{(s)}+TZ^{(s)H})R^{(s)}$ the matrix obtained after $s$ steps of the  QR  algorithm. Then, for $i=1, \ldots, n-1$, we have $\lim_{s\to \infty} q^{(s)}_{i+k+1, i+k}= 0$, and moreover, for  any prescribed tolerance $\tau$, if $s$ is such that $|q_{i+k+1, i+k}^{(s)}|<\tau K$,  then $|a_{i+1,i}^{(s)}|<\tau$.
\end{lemma}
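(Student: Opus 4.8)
The plan is to reduce both assertions to the single pointwise identity~\eqref{adefl}, which already relates the outermost subdiagonal entry $a_{i+1,i}$ of $A$ to the corresponding outermost entries $q_{i+k+1,i+k}$, $r_{i+k,i}$ and $l_{i+1,i+k+1}$ of the three factors, and then to control the $L$ and $R$ factors uniformly in the step index $s$ by means of Corollary~\ref{cor_prod}. First I would observe that, by Theorem~\ref{teo-qr}, every iterate $\hat A^{(s)}$ keeps the block form~\eqref{hatr}, its leading $n\times n$ block $A^{(s)}$ stays upper Hessenberg and unitarily similar to $A$, and the representation $\hat A^{(s)}=L^{(s)}(Q^{(s)}+TZ^{(s)H})R^{(s)}$ retains the structural properties of Theorem~\ref{teo:rep}. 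Since throughout Section~\ref{sec:alg} the factor $R$ is kept proper (the singular case being removed by a preliminary zero-shift step), $A^{(s)}$ is nonsingular and identity~\eqref{adefl} is legitimate at every $s$, giving
$$a^{(s)}_{i+1,i}=q^{(s)}_{i+k+1,i+k}\,r^{(s)}_{i+k,i}/\bar{l}^{(s)}_{i+1,i+k+1},\qquad i=1,\dots,n-1.$$

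Next I would invoke Corollary~\ref{cor_prod}, whose bounds on the outermost entries hold uniformly in $s$ (they are ``constant over QR steps''): $K\le|l^{(s)}_{i+1,i+k+1}|\le 1$ and $K|\det(A)|\le|r^{(s)}_{i+k,i}|\le 1$, with $K=1/\prod_{i=1}^k\sigma_i(X)$. Taking moduli in the displayed identity and using $|r^{(s)}_{i+k,i}|\le 1$ together with $|l^{(s)}_{i+1,i+k+1}|\ge K$ yields $|a^{(s)}_{i+1,i}|\le|q^{(s)}_{i+k+1,i+k}|/K$; hence the hypothesis $|q^{(s)}_{i+k+1,i+k}|<\tau K$ forces $|a^{(s)}_{i+1,i}|<\tau$, which is exactly the quantitative deflation criterion. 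For the limit I would instead solve the identity for the $Q$-entry and bound the other two factors in the opposite direction, $|l^{(s)}_{i+1,i+k+1}|\le 1$ and $|r^{(s)}_{i+k,i}|\ge K|\det(A)|$, to obtain $|q^{(s)}_{i+k+1,i+k}|\le|a^{(s)}_{i+1,i}|/(K|\det(A)|)$. Because convergence of the QR iteration to upper triangular form means $a^{(s)}_{i+1,i}\to 0$ as $s\to\infty$, and $K|\det(A)|>0$ is a fixed positive constant, this inequality forces $q^{(s)}_{i+k+1,i+k}\to 0$.

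The only point that needs care is the uniformity in $s$ of these factor bounds: the denominator in~\eqref{adefl} must stay bounded away from zero and the numerator bounded above, simultaneously for all iterates. This is precisely what the ``constant over QR steps'' clauses of Corollary~\ref{cor_prod} guarantee, through the invariance of $K$ and of $|\det(A^{(s)})|=|\det(A)|$ under the unitary similarities~\eqref{P}. Once this uniform control is in place, both statements follow by merely taking moduli, so I do not expect a genuine obstacle beyond keeping the index shifts in~\eqref{adefl} and in the outermost diagonals of $L$ and $R$ mutually consistent.
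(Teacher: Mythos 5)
Your proposal is correct and follows essentially the same route as the paper: both rest on the pointwise identity~\eqref{adefl} combined with the uniform bounds $K\le|l^{(s)}_{i+1,i+k+1}|\le 1$ and $K|\det(A)|\le|r^{(s)}_{i+k,i}|\le 1$ from Corollary~\ref{cor_prod}, solving for the $Q$-entry to get the limit and for the $A$-entry to get the deflation criterion. Your additional remarks on why \eqref{adefl} is legitimate at every step (properness of $R^{(s)}$, nonsingularity of $A^{(s)}$) only make explicit what the paper leaves implicit.
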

\begin{proof}
From relation~\eqref{adefl}	we have that 
$$
|q^{(s)}_{i+k+1, i+k}| =|a^{(s)}_{i+1, i}|\,\frac{\left|\bar l ^{(s)}_{i+1, i+k+1}\right|}{|r^{(s)}_{i+k, i}|}, 
$$
where $a_{i,j}^{(s)}$  are the entries of the matrix $A^{(s)}$  defined according to Theorem \ref{teo-qr}.
Using Corollary~\ref{cor_prod} and the convergence of the QR algorithm we have
 $\lim_{s\to \infty} q^{(s)}_{i+k+1, i+k}=0$. 
	
From Corollary~\ref{cor_prod} we have $|r^{(s)}_{i+k, i}|\le 1$ and $\left|\bar l^{(s)}_{i+1, i+k+1}\right|\ge K$. Hence 
$$
|a^{(s)}_{i+1, i}|=|q^{(s)}_{i+k+1, i+k}|\,\frac{|r^{(s)}_{i+k, i}|}{\left|\bar l^{(s)}_{i+1, i+k+1}\right|}\le \tau.
$$
\qed
\end{proof}

\begin{remark}
	Lemma~\ref{lem:defcrit} suggests to use as deflation criteria the condition \\ $|q_{i+k+1, i+k}^{(s)}|<\varepsilon K$, where $\varepsilon$ is the machine precision. The value of $K$ as described in Corollary~\ref{cor_prod}  can be computed as $1/|\det(T_k)|$ for the upper triangular matrix $T_k$ given in the proof of Theorem~\ref{teo:rep}. Note that, as we represent the matrix $Q$  in terms of Givens rotations $\begin{bmatrix} c &s\\-s &\bar c\end{bmatrix}$, with $s\in \mathbb{R}, s\ge 0$, we can simply check when a sine value is smaller than $\varepsilon K$. When this condition is satisfied we replace the corresponding Givens transformation  with the $2\times 2$ unitary diagonal  matrix
	$D_j=\left[\begin{array}{cc}c/|c|&\\  &\bar c/|c| \end{array}\right]$.  In~\cite{MV14} it is shown, in a more general setting, that the eigenvalues of the matrix obtained by replacing a Givens transformation by a $2\times 2$ identity matrix are accurate approximations of the original ones when the Givens rotation is close to the identity. This is a consequence of the Bauer-Fike Theorem. Applying the same  idea to our framework it is immediate to see that the absolute error introduced in a single eigenvalue is bounded by $\kappa_2\|G_j-D_j\|_2$ where  $\kappa_2$ is the condition number of the eigenvector matrix (assuming to work with diagonalizable matrices)  and $G_j=\left[\begin{array}{cc}c&s\\ -s &\bar c \end{array}\right]$, $j=i+k+1$ is the Givens rotation such that $|q_{i+k+1, i+k}|=s\le \varepsilon K$. Moreover we can bound $\|G_j-D_j\|_2$ by $\sqrt{2} \varepsilon  K$. 
\end{remark}

\section{Backward error analysis}\label{sec:back}

In this section we bound the backward error of the shifted QR algorithm presented in Section~\ref{sec:alg}.  The  algorithm basically can be described by the following steps:
\begin{enumerate}
\item {\em Preliminary phase}:   the input unitary-plus-low-rank Hessenberg 
matrix  $A$  is embedded into a larger  Hessenberg matrix $\hat A$; 
\item {\em Initialization phase}: a  compressed $LFR$-type representation of $\hat A$  is  computed, $\hat A=L^{(0)}F^{(0)}R^{(0)}$;
\item {\em Iterative phase}: at each step $h$, given the representation of the matrix $\hat A^{(h)}=L^{(h)}F^{(h)}R^{(h)}$ we perform a  shifted QR iteration with the proposed algorithm, and  return $A^{(h+1)}=L^{(h+1)}F^{(h+1)}R^{(h+1)}$.
\end{enumerate}

As suggested in the introduction the initialization phase can be  determined  in several different ways depending on the additional
features of the input matrix.  In this section to be consistent with the   approach   pursued  in the previous sections
we  only consider the case where  the representation is found by a sequence of QR factorizations.

Concerning  the preliminary phase we notice that as in the proof of Theorem~\ref{theo:embedding} we can first  compute  the economy size QR decomposition of the full rank matrix $Y$. If we set $Y=QR$  and then rename the components of $A=U+XY^H$ as follows
$U\leftarrow U$, $X\leftarrow XR^H$   and 
$Y\leftarrow Q$  we find that $ Y^HY=I_k$. In this way  the embedding  is performed  at a  negligible  cost  by introducing a small error
perturbation of order $\tilde \gamma_k \|A\|_2$ where $\tilde \gamma_k=c k\varepsilon /(1-c k\varepsilon)$~\cite{Higham_book} and 
$c$ and $\varepsilon$  denote a small integer constant and the
machine precision, respectively. For the sake of simplicity it is assumed that   $\|X \|_2=\|X \|_2 \|Y\|_2\approx \|A\|_2,$ and we refer to the notation used in~\cite{Higham_book}.

\subsection{Backward stability of the representation}

In this section we prove that our representation is backward stable. The ingredients of the representation essentially are: the $k$-lower Hessenberg matrix $L$, the upper Hessenberg matrix $Q$, the $k\times k$ upper triangular matrix $T$, matrix $Z^H$ and the $k$-upper Hessenberg matrix $R$. 
In particular, given un upper Hessenberg matrix $\hat A=\hat U+\hat X\hat Y^H$ we would like to show that the exact representation of $\hat A=L(Q+TZ^H)R$ differs from the computed one $\tilde A=\tilde L(\tilde Q+\tilde T\tilde Z^H)\tilde R$ by an amount proportional to $\|\hat A\|_2\approx \|A\|_2$ and to the machine precision $\varepsilon$.

The computation proceeds by the following steps.  
\begin{itemize}
	\item Computation of $T$ and $L$. We note that $L$ and $T$, $T=[T_k^T;0]^T$ in exact arithmetics are respectively  the $Q$ and the $R$ factors of the QR factorization of matrix $\hat X$. From Theorem 19.4 of~\cite{Higham_book} and consequent  considerations there exists a perturbation $\Delta_X$ such that 
	 \begin{equation}\label{xdx}
(\hat X+\Delta_X)= \tilde L \tilde T,
	 \end{equation} where
	$\|\Delta_{X}(:,j)\|_2\le  \sqrt{k} \,\tilde\gamma_{Nk}\|\hat X(:,j)\|_2$,  $\tilde L=L+\Delta_L$ , $\|\Delta_L(:,j)\|_2\le \tilde \gamma_{Nk}.$ 
       \item  Computation of $Q$. The computed matrix $\tilde Q$ is obtained starting from matrix $\tilde B=\tilde L^H (\hat U+\Delta_U)\doteq L^H\hat U+{\Delta_L}^H  \hat U+L^H \Delta_U$, and  $\|\Delta_U\|_2\lesssim \tilde \gamma_{N^{2}}$, which derives from the backward analysis of the product of two unitary factors.
        The computed factor is  $$\tilde Q=\left[\begin{array}{c|c} I_k & \\ \hline  &\tilde Q_1\end{array} \right],$$ where $Q_1$ is obtained applying the QR factorization to $\tilde B(k+1:N, 1:n)$.
 Reasoning similarly as done before we have
$$
\tilde B(k+1:N, 1:n)+\Delta_B^{(1)}=\tilde Q_1 \tilde R_1,
$$
where $\| \Delta_{B(:, j)}^{(1)}\|_2\le \tilde \gamma_{nk}$, and $\tilde Q=Q+\Delta_Q$ with $\|\Delta_Q(:,j)\|_2\le \tilde \gamma_{nk}.$ 
 \item Computation of $R$. Similarly the computed $\tilde R$ is such that 
 $
 \tilde B+ \Delta_B^{(2)}=\tilde Q \tilde R,
 $
 where $||\Delta_B^{(1)}\|_2$ is small and $\tilde R=R+\Delta_R$  with $\|\Delta_R(:,j)\|_2\le \tilde \gamma_{Nk}.$ 
\item Computation of $Z^H$. As we have seen we can perform QR iterations without computing $Z$ explicitly because we can retrieve the matrix $Z$ at convergence using formula~\ref{form:Z}.
 However for the stability of the all process we prefer  to work with explicit $Z$, updating it at each step. This will affect the computational cost of a factor $O(nk)$ at each step but results in a more stable method (see Figure~\ref{fig:bsZ}  and the  related  discussion). 
 
 Note that in exact arithmetic $Z=R Y$. Since the product by a small perturbation of a unitary matrix is backward stable, we get 
 $$
 \tilde Z=\tilde R(Y+\Delta Y), \quad \mbox{ where }  \|\Delta Y\|_2 \lesssim \tilde \gamma_{N^{2}}. 
 $$

\end{itemize}
We can conclude that
\begin{eqnarray}
\tilde A&=&\tilde L(\tilde Q+\tilde T \tilde Z^H)\tilde R=\tilde L(\tilde B+\Delta_B^{(2)} )+\tilde L \tilde T\tilde Z^H\tilde R=\\
&=& \tilde L(\tilde L^H (\hat U+\Delta_U)+\Delta_B^{(2)})+ (\hat X+\Delta_X)(\hat Y+\Delta_Y)^H \tilde R^H \tilde R.
\end{eqnarray}
Since $\tilde L \tilde L^H=I+\Delta_1$, and $\tilde R^H \tilde R=I+\Delta_2$ with $\|\Delta_1\|_2$  and $\|\Delta_2\|_2$ bounded by a constant times $ \tilde \gamma_{N^{2}}$, we have
$$
\tilde A\doteq \hat U+\hat X\hat Y^H+E= \hat A+E
$$
where
$
E=\Delta_1 \hat U+\Delta_U+L \Delta_B^{(2)}+\hat X\hat Y^H \Delta_2+\Delta_X \hat Y^H+\hat X \Delta_Y^H,
$ 
and $\|E \|_2$ is bounded by a constant times $\|\hat X \|_2 \|\hat Y\|_2\approx \|A\|_2.$

\subsection{Backward stability of the implicit  QR steps}

Matrix $A^{(1)}$, computed by a QR step applied to matrix $\hat A$, is such that $A^{(1)}=P^H\hat AP$ with $P$ unitary.
In this section we want to show that there exists a perturbation matrix $E_A$ such that the computed matrix $\tilde A^{(1)}=P^H(\hat A+E_A)P$ where $\|E_A\|_2$ is proportional to $\|A\|_2$ and to the machine precision $\varepsilon$.

Let $\hat A=L(Q+TZ^H)R$ be the representation of $\hat A$ in floating point arithmetic (note that for not  overloading the notation we dropped the superscripts).  Similarly the new  representation of the matrix  $A^{(1)}=P^H \hat A P=P^H\,L(Q+TZ^H)R \,P$
is $A^{(1)}=L^{(1)}(Q^{(1)}+T^{(1)}Z^{(1)H})R^{(1)}$, where
\begin{eqnarray*}
L^{(1)}&=P^H L S,\quad Q^{(1)}
=S^HQV, \quad T^{(1)}=S^HT,\\
& Z^{(1)}=V^H Z,\mbox{ and } R^{(1)}=V^HRP,
\end{eqnarray*}
and the unitary matrices $S$ and $V$ are those originating from the turnovers on the Givens rotations composing $L$ and $R$.
\begin{theorem}\label{teobsf}
	After one step of the implicit QR method where the operations are performed in floating point arithmetic we have
	$$\tilde L^{(1)}=P^H(L+E_L)S,\quad \tilde Q^{(1)}
	=S^H(Q+E_Q)V, \quad T^{(1)}=S^HT,\quad
	$$
	$$ Z^{(1)}=V^H (Z+E_Z),\mbox{ and } R^{(1)}=V^H(R+E_R)P,$$
        where $\|E_L\|_2, \|E_Q\|_2, \|E_R\|_2$ are bounded by a small multiple of the machine precision $\varepsilon$,
        while $\|E_Z\|_2$ is a bounded by $\tilde \gamma_N$, where $\tilde \gamma_N=cN\varepsilon/(1-cN\varepsilon)$, and  $c$ is a small constant.  
\end{theorem}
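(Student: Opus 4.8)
The plan is to reduce the global backward-error statement to a \emph{local} analysis of the two elementary operations out of which the whole sweep is built --- the turnover and the fusion --- and then to compose these local bounds, exploiting the fact that the three factors $L$, $Q$, $R$ are never expanded into dense products but are kept throughout as chains of Givens rotations. Because each Givens rotation is stored as an exactly unitary $2\times2$ core, the only arithmetic errors enter through the recombinations of rotations, and these act on windows of size $O(1)$. First I would fix the exact unitary matrices $P$, $S$, $V$ to be the products of the \emph{computed} chasing rotations, so that they are genuinely unitary, and then \emph{define} the perturbations by $E_L = P\tilde L^{(1)}S^H - L$, $E_Q = S\tilde Q^{(1)}V^H - Q$, $E_R = VR^{(1)}P^H - R$ and $E_Z = VZ^{(1)} - Z$; the whole task is then to bound these four quantities.

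The foundation is the backward stability of a single turnover and of a single fusion. A turnover rewrites a product of three rotations, i.e.\ a fixed $3\times3$ unitary block, as another such product, while a fusion merges two rotations acting on the same pair of rows into one; in both cases a standard floating-point analysis of a fixed-size unitary computation shows that the computed output rotations are the \emph{exact} rotations of an input block perturbed by a term of size $O(\varepsilon)$. Since the outputs are again stored as exact rotations, no length-dependent accumulation occurs at this level: each elementary step contributes only a unitary factor $I + O(\varepsilon)$.

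Next I would propagate these local errors along the paths traced by the bulge-chasing rotations. The updates $L^{(1)}=P^HLS$, $Q^{(1)}=S^HQV$ and $R^{(1)}=V^HRP$ are realised by sliding the chasing rotations through the respective chains via turnovers, with a final fusion absorbing the last rotation into $Q$. Matching the computed chain against the exact chain turnover by turnover, and using unitarity at each merge to prevent any amplification, I would collect the $O(\varepsilon)$ contributions into the single perturbations $E_L$, $E_Q$, $E_R$, each bounded by a small multiple of $\varepsilon$. The middle factor needs no error term: since $S(1{:}k,1{:}k)=I_k$ and $T=[T_k^T,\,0]^T$ is supported on the first $k$ rows, the identity $T^{(1)}=S^HT=T$ holds with no rounding at all. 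The matrix $Z$ is the exception: it is kept as an explicit $N\times k$ array, and $V^H$ is applied to it as a full chain of Givens rotations acting on columns of length $N$; the classical bound for applying a sequence of rotations to a vector~\cite{Higham_book} then yields $\|E_Z\|_2\lesssim\tilde\gamma_N$, which is exactly where the $N$-dependence appears and why $Z$ is treated differently from the factored parts.

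The hard part will be the composition step for $L$, $Q$, $R$: one must verify that the per-turnover backward errors, each attributable to a slightly perturbed $3\times3$ block, can be gathered \emph{consistently} into single perturbations of the \emph{original} factors that are compatible with the exact definitions of $P$, $S$, $V$. The delicate point is that a given rotation of a chain is touched by several successive turnovers as the bulge slides past, so one has to argue --- turnover by turnover, invoking the unitary invariance of the $2$-norm at every recombination --- that these repeated perturbations do not reinforce but remain of order $\varepsilon$ in aggregate. Once this bookkeeping is under control the stated bounds follow, and the explicit, non-factored treatment of $Z$ accounts for its weaker $\tilde\gamma_N$ estimate.
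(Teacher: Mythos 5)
Your proposal is correct and follows essentially the same route as the paper: the paper's proof simply defers the analysis of the factored parts $L,Q,R$ to the turnover/fusion backward-error argument of Theorem 7.1 in \cite{AMVW15} (which is exactly the local-perturbation-plus-composition bookkeeping you describe), notes that $S^HT=T$ holds exactly by structure, and bounds $E_Z$ by $\tilde\gamma_N\|Y\|_2=\tilde\gamma_N$ using the standard analysis of applying a chain of rotations to the explicitly stored $Z$, whose columns are orthonormal. Your write-up is a more detailed reconstruction of the same argument rather than a different one.
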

\begin{proof}
  The backward analysis of the error in the unitary factors, $L, R$ and $Q$ is similar to the one performed in Theorem 7.1 in \cite{AMVW15}.
  To prove that the backward error in $T^{(1)}$ is zero we note that, because  of the structure of $S$ and of $T$, the product  $S^HT$ is in
  practice never computed since, $S^HT=T$.  The computation of $Z^{(1)}$
  is the result of the product of a  unitary factor and the rectangular matrix $Z$ whose columns are orthonormal. For this reason $\|E_Z\|_2$
  is bounded by   $\tilde \gamma_N\|Y\|_2=\tilde \gamma_N$.\qed
\end{proof}	

Summarizing we obtain the following result.

\begin{theorem}
  Let $\tilde A^{(1)}$ be the result computed in floating point arithmetic  of a QR step applied to the  matrix $\hat A$. Then there exists a perturbation
  $\Delta_A$ such that  $\tilde A^{(1)}=P^H(\hat A+\Delta_A)P$, and $\|\Delta_ A\|_2\le \tilde \gamma_{N^2}\|\hat A\|_2$, where $\tilde \gamma_{N^2}=cN^2\varepsilon/(1-cN^2\varepsilon)$, and  $c$ is a small constant. 
\end{theorem}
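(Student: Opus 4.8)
The plan is to assemble the single backward-error statement directly from the per-factor bounds already established in Theorem~\ref{teobsf}. First I would write the computed iterate as the $LFR$ product of the computed factors, $\tilde A^{(1)}=\tilde L^{(1)}(\tilde Q^{(1)}+T^{(1)}\tilde Z^{(1)H})\tilde R^{(1)}$, and substitute the expressions supplied by Theorem~\ref{teobsf}: $\tilde L^{(1)}=P^H(L+E_L)S$, $\tilde Q^{(1)}=S^H(Q+E_Q)V$, $T^{(1)}=S^H T$, $\tilde Z^{(1)}=V^H(Z+E_Z)$ and $\tilde R^{(1)}=V^H(R+E_R)P$. Since $T^{(1)}\tilde Z^{(1)H}=S^H T (Z+E_Z)^H V$, the middle factor collapses to $S^H\bigl[(Q+E_Q)+T(Z+E_Z)^H\bigr]V$.

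Next I would exploit that the accumulated transformation matrices $S$ and $V$ are unitary, so that $SS^H=I$ and $VV^H=I$ telescope out of the product. This is the step that makes the whole argument work, and it is legitimate precisely because Theorem~\ref{teobsf} has already folded the floating-point imperfection of the turnover chains into the additive errors $E_L,E_Q,E_R,E_Z$, leaving $S$ and $V$ exactly orthogonal. After cancellation I obtain
$$
\tilde A^{(1)}=P^H(L+E_L)\bigl[(Q+E_Q)+T(Z+E_Z)^H\bigr](R+E_R)P,
$$
so that the sought perturbation is $\Delta_A=(L+E_L)\bigl[(Q+E_Q)+T(Z+E_Z)^H\bigr](R+E_R)-L(Q+TZ^H)R$, exhibiting the required form $\tilde A^{(1)}=P^H(\hat A+\Delta_A)P$ with $\hat A=L(Q+TZ^H)R$.

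I would then expand $\Delta_A$ and separate the first-order terms $E_L(Q+TZ^H)R$, $L(E_Q+TE_Z^H)R$, and $L(Q+TZ^H)E_R$ from the higher-order terms, which are products of at least two error matrices and hence of order $\varepsilon^2$ or $\varepsilon\,\tilde\gamma_N$ and negligible. For the bounds I would use that $L,Q,R$ are unitary (so $\|L\|_2=\|R\|_2=1$) and that $Z$ has orthonormal columns, whence $\|TZ^H\|_2=\|T\|_2$. A key simplification is that $\|\hat A\|_2=\|Q+TZ^H\|_2$ exactly, and $\|T\|_2\le\|\hat A\|_2+1$, so $\|T\|_2$ is comparable to $\|\hat A\|_2\approx\|A\|_2$ via the block structure \eqref{hatr}. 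Thus the three $E_L,E_Q,E_R$ contributions are each bounded by a multiple of $\varepsilon\,\|\hat A\|_2$, while the $E_Z$ contribution is controlled by $\|T\|_2\|E_Z\|_2\lesssim\tilde\gamma_N\|\hat A\|_2$.

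The main obstacle I anticipate is the term $L\,T\,E_Z^H\,R$: unlike the purely unitary errors, it is scaled by $\|T\|_2$, and it carries the coarser bound $\|E_Z\|_2\le\tilde\gamma_N$ of Theorem~\ref{teobsf} rather than a bare multiple of $\varepsilon$. This is exactly the term that forces the explicit (rather than recovered) treatment of $Z$ discussed earlier, and it is what prevents a sharper estimate. Collecting all contributions and absorbing the constants and the higher-order remainder, the conservative aggregate bound $\|\Delta_A\|_2\le\tilde\gamma_{N^2}\|\hat A\|_2$ comfortably dominates every term, which completes the argument.
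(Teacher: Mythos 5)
Your proposal is correct and follows essentially the same route as the paper: the paper's own proof simply states that the result is obtained by assembling the per-factor error contributions of Theorem~\ref{teobsf}, which is precisely what you carry out in detail (cancelling the exactly unitary $S$ and $V$, isolating the first-order terms, and noting that the $T E_Z^H$ term is the one governed by the coarser $\tilde\gamma_N$ bound). Your write-up is a legitimate fleshed-out version of the argument the authors leave implicit.
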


\begin{proof}
The proof follows easily by using the results proved in Theorem~\ref{teobsf} by  assembling together the contributions of each error in the factors of $A$.	\qed
\end{proof}

\section{Numerical experiments} \label{sec:numexp}
We tested our algorithm on several classes of matrices.
The purpose of the experimentation is 
to show that the algorithm is indeed backward stable as proved in Section~\ref{sec:back}  
and  to confirm that the computation of all the eigenvalues
by our method requires  $O(n^2k)$ operations as proved theoretically. 

The test suite consists of the  following:
\begin{itemize}
\item{Companion matrices associated with scalar polynomials whose roots are known} (see description in Table~\ref{tab:scal_pol}).
\item{Companion matrices associated with scalar polynomials whose roots are unknown}
  (see  description in Table~\ref{tab:scal_pol}).
\item{Random fellow matrices~\cite{Raf_book} with a prescribed norm.}
\item {Block companion matrices associated with matrix polynomials from the NLEVP collection~\cite{NLEVP}.}
\item{Random unitary plus rank-$k$ matrices}.
\item{Random unitary-diagonal-plus-rank-$k$ matrices}.
\item{Fiedler penta-diagonal companion matrices~\cite{Fi03,CM2}}. The associated polynomials are the scalar polynomials in Table~\ref{tab:scal_pol} and we have then associated the same reference number.
\end{itemize}

In~\cite{DDP14} the backward stability of polynomial rootfinding using  Fiedler  matrices different from the companion matrix is analyzed. The analysis shows that, when some coefficients of the monic polynomial are large,  it is preferable to use the standard  companion form. However, when the coefficients can be bounded by a moderate number, the algorithms using Fielder matrices are as backward stable  as the ones using the companion. The purpose of our experiments with Fiedler pentadiagonal matrices is not to suggest to use these matrices for polynomial rootfinding, but to show that the backward stability of the proposed method is not affected by a larger $k$ since for Fiedler pentadiagonal matrices we have $k=n/2$. 

\begin{table}
{\small
\begin{tabular}{|r|l|r|}
	\hline
	$\#$ & Description, Roots                                  & Degree \\ \hline
	1    & Wilkinson polynomial \quad                                                                                                                                     $1, 2, \ldots n$ &    $n$ \\
	2    & Scaled and shifted Wilkinson polynomial                                                       \qquad                                                      $-2.1,-1.9, \ldots, 1.7$ &    $n$ \\
	3    & Reverse Wilkinson polynomial                                                                  \qquad                                                       $ 1, 1/2, \ldots, 1/n$ &    $n$ \\
	4    & Prescribed Roots                                                                              \qquad                                          $2^{-m}, 2^{-(m-1)}, \ldots, 2^{m}$ & $2m+1$ \\
	5    & Prescribed roots shifted \qquad                                                                                                                       $2^{-m}-3, \ldots, 2^{m}-3$ & $2m+1$ \\
	6    & Chebyshev polynomial                                                                          \qquad                                       $\cos\left(\frac{(2j-1)\pi}{2n}\right)$ &    $n$ \\
	7    & $\sum_{i=0}^n x^i $                                                                           \qquad $\cos\left(\frac{2\pi j}{n+1}\right)+i\, \sin\left(\frac{2\pi j}{n+1}\right)$ &    $n$ \\
	\hline
	8   & Bernoulli polynomial                                                                          \qquad                                                                            -- &    $n$ \\
	9   & $p_1(z)=1+\left(m/(m+1)+(m+1)/m\right) z^m +z^{2m}$                                           \quad                                                                            -- &   $2m$ \\
	10   & $p_2(z)=\frac{1}{m} \left(\sum_{j=0}^{m-1}(m+j)z^j+(m+1)z^m+\sum_{j=0}^{m-1} z^{2m-j}\right)$ \quad                                                                            -- &   $2m$ \\
	11   & $p_3(z)=(1-\lambda)z^{m+1}-(\lambda-1)z^m +(\lambda+1) z+(1-\lambda)$ , $\lambda=0.999$       \quad                                                                           -- &  $m+1$ \\
	\hline 
\end{tabular}}
\caption{In the upper part of the table scalar polynomials  whose roots are known. These polynomials have also been tested in~\cite{CG,AMVW15}, at the bottom of the table polynomials with particular structures, tested also in~\cite{JT,BDG}.\label{tab:scal_pol}}
\end{table}
\begin{figure}
	
	\includegraphics[width=1.0\linewidth]{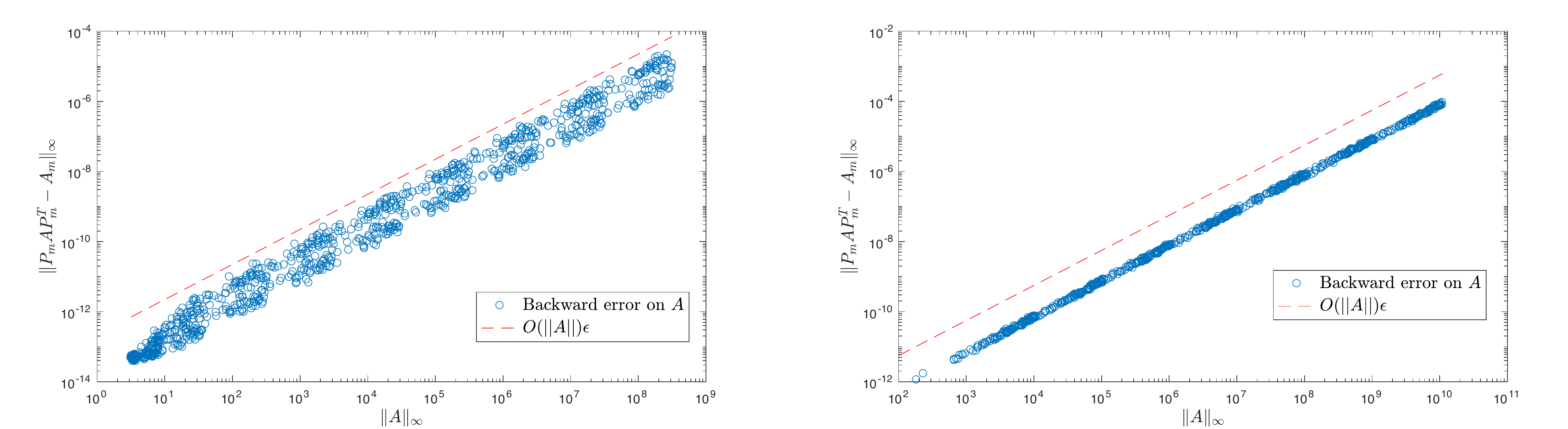}
	
	\caption{Absolute backward error in the computation of the eigenvalues respect to the norm of the matrix. On the left the results obtained from thousand random unitary-plus-rank-$5$ matrices of size $50$  that were generated as explained in Theorem~\ref{teo:rep1}. On the right the absolute backward error is plotted against the norm of the matrix for one thousand unitary diagonal-plus-rank-5 matrices of size 100. The dashed lines represent a reference line for the theoretical  backward stability.  \label{fig:bs}}
\end{figure}

\begin{figure}
	
	\includegraphics[width=0.8\linewidth]{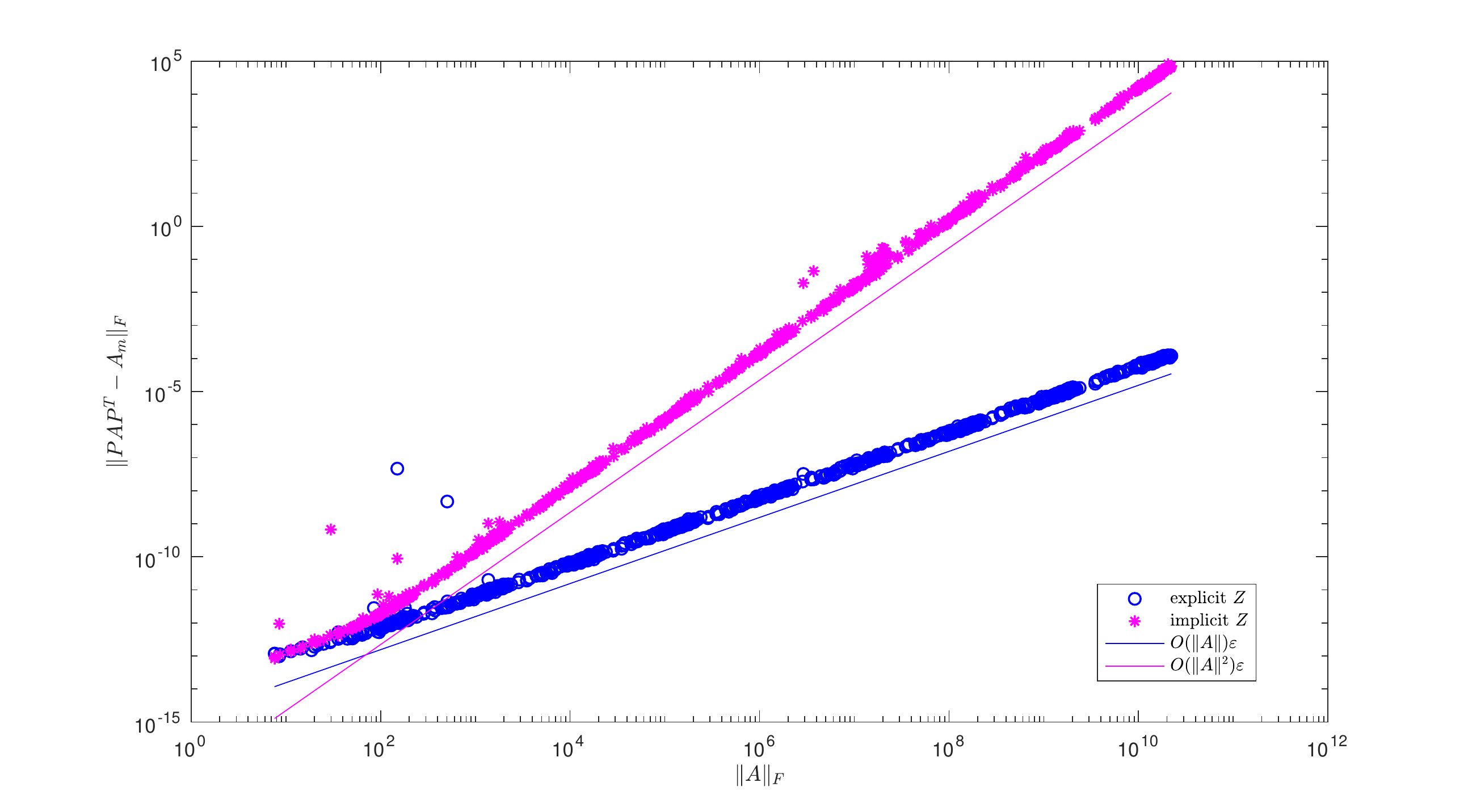}
	
	\caption{Comparison on the backward errors of the algorithm with an explicit and implicit $Z$. We plotted the absolute backward error respect to the norm of the matrix. The results are obtained from a thousand random matrix polynomials of degree $10$ where the coefficients are $5\times 5$ matrices, whose norms range from $1$ to $10^9$.  The solid lines represent a reference lines to show that in the case  $Z$ is explicitly computed  the absolute backward error behaves as $O(\|A\|)\varepsilon$ while, keeping $Z$ implicit, the backward error increases as  $O(\|A\|^2)\varepsilon$. \label{fig:bsZ}}
	\end{figure}

The algorithm is implemented in Matlab and the code is available upon request. 
In order to check the accuracy of the output we compare the computed approximations
with the actual eigenvalues of the matrix, in the case these are known. Otherwise we consider the values  returned  by the internal Matlab function {\tt eig} applied to the initial
matrix $A$ already in Hessenberg form and  with the balancing option on. 
Specifically,
we match the two lists of approximations and then find the average error as
$$
\mbox{fw}_{\mbox{err}} =\frac{1}{n}\sum_{j=1}^n err_j,
$$
where $err_j$ is the relative error in the computation of the $j$-th eigenvalue.
The eigenvalues are retrieved reconstructing the matrix at convergence and taking the diagonal entries. Note that unitary factor in $F$ has reduced to a unitary  diagonal matrix, plus the  the rank $k$ part confined in the first $k$ rows.

\begin{table}
	\begin{center}
		\begin{tabular}{lcccccccc}
			\hline
			\#                                    &     $n$      & $\|A\|_{\infty}$ & $\mbox{bw}_{\mbox{err}}(A)$ &   $\mbox{bw}_{\mbox{err}}(p)$    &     AMVW      &      BEGG      &     ZHSEQR        \\ \hline
			1                                     & $10       $  &   $ 1.93e+07$    &    $ 1.68e-15         $     & $ 6.31e-15                     $ & $ 5.12e-15  $ & $ 1.55e-15   $ &  $ 4.11e-16  $    \\
			1                                     & $ 15       $ &   $ 9.62e+12$    &    $ 1.00e-15         $     & $ 8.90e-15                     $ & $ 3.96e-15 $  & $ 3.45e-15   $ &  $ 1.33e-15 $     \\
			1                                     & $ 20       $ &   $ 2.28e+19$    &    $ 2.03e-15         $     & $ 5.28e-14                     $ & $ 1.04e-14 $  & $ 3.35e-01  $  &  $ 4.47e-15  $   \\
			2                                     & $ 20       $ &   $ 6.69e+02$    &     $ 1.55e-15       $      & $ 1.36e-14                     $ & $ 8.21e-16 $  & $ 2.17e-15   $ &  $ 1.09e-15  $    \\
			3                                     & $ 20       $ &    $1.03e+01$    &    $ 3.58e-15         $     & $ 8.08e-15                     $ & $ 2.23e-15 $  & $ 2.93e-14   $ &  $ 1.24e-15  $    \\
			4                                     & $ 20       $ &   $ 1.93e+14$    &    $ 1.55e-15         $     & $ 4.98e-14                     $ & $ 2.70e-15  $ & $ 4.32e-14   $ &  $ 4.44e-16  $    \\
			5                                     & $ 20       $ &    $1.33e+18$    &    $ 1.44e-15         $     & $ 4.41e-14                     $ & $ 2.72e-14  $ & $ 5.42e-01   $ &  $ 4.38e-15  $    \\
			6                                     & $ 20       $ &   $ 2.02e+01$    &    $ 1.63e-15         $     & $ 1.70e-14                     $ & $ 1.54e-15  $ & $ 1.79e-14   $ &  $ 2.52e-15  $    \\
			7                                     & $ 20       $ &    $6.32e+00$    &    $ 3.41e-15         $     & $ 1.81e-14                     $ & $ 2.00e-15  $ & $ 2.67e-14   $ &  $ 2.85e-15  $   \\ \hline
		    8                                   & $ 20       $ &   $ 6.76e+04$    &    $ 1.86e-15         $     & $ 2.50e-14                     $ & $ 2.17e-15  $ & $ 1.70e-14   $ &  $ 4.56e-15  $   \\
			9                                    & $ 40       $ &    $6.71e+00$    &    $ 7.98e-15         $     & $ 1.87e-13                     $ & $ 9.95e-17  $ & $ 7.07e-01   $ &  $ 3.52e+13  $  \\
			10                                    & $ 40       $ &    $1.14e+01$    &    $ 5.00e-15         $     & $ 3.10e-14                     $ & $ 4.99e-15  $ & $ 3.92e-14  $  &  $ 6.11e-15  $    \\
			10                                    & $ 20       $ &    $7.99e+00$    &    $ 2.89e-15         $     & $ 1.27e-14                     $ & $ 2.94e-15  $ & $ 1.96e-14  $  &  $ 4.85e-15  $    \\
			11                                    & $ 31       $ &   $ 2.83e+03$    &    $ 1.91e-15         $     & $ 4.64e-13                     $ & $ 4.89e-15  $ & $ 2.43e-13   $ &  $ 1.03e-14  $    \\ \hline
		\end{tabular}
	\end{center}\caption{Results on scalar companion matrices from Table~\ref{tab:scal_pol}. We see that the backward error is always very small,  the forward error (with is not shown) is  dependent on the conditioning of the problem. In the last three column we report~\cite{AMVW15} the backward error in terms of the polynomial coefficients of the unbalanced, single shifted version of the algorithm  AMVW~\cite{AMVW15}, of BEGG~\cite{BEGG}, and the LAPACK routine ZHSEQR.  \label{tab:sc_pol} }
\end{table}

To validate the results provided in Section~\ref{sec:back} we show the behavior of the backward error on the computed Schur form.   Let $P$ be the accumulated unitary similarity transformation obtained applying steps of the implicit QR algorithm as described in Section~\ref{sec:alg} to the augmented matrix $\hat A$.  Because the last $k$  rows of $\hat A$ are null according to Theorem \ref{teo-qr}, $P$  is block diagonal 
$$
P=\left[\begin{array}{cc}P_1 & \\ & P_2\end{array}\right],
$$ 
where $P_1\in \mathbb{C}^{n\times n}, P_2\in \mathbb{C}^{k\times k}$ are unitary matrices. We can   set $P_2=I_k$ since no rotations act on the last $k$ rows of the enlarged matrix.  Assume that $m$ is the number of iterations needed to reach convergence  and that  $\tilde A^{(m)}$ is the   matrix reconstructed from the computed factors $\tilde L_m$,  $\tilde F_m$, $\tilde R_m$ produced by performing $m$ steps of the implicit algorithm. Not to overload the notation we denote with the same symbol $\tilde A^{(m)}$ its $n\times n$  leading principal submatrix.  As in~\cite{CG} we consider as a measure of the backward stability the relative error 
\begin{equation}\label{be}
\mbox{bw}_{\mbox{err}}(A)=\frac{\| P_1^TA P_1-\tilde A^{(m)}\|_{\infty}}{\|A\|_{\infty}}.
\end{equation}

In order to compare the stability of our algorithm with that of algorithm tailored for polynomial rootfinding~\cite{AMVW15,BEGG} we computed also the backward error in terms of the coefficient of the polynomial. In particular, let $p(x)=\sum_{i=0}^n p_i x^i=\prod_{i=1}^n (x- \lambda_i)$ be our monic test polynomial with roots $\lambda_i$, and denote by $\tilde \lambda_i$  the computed roots obtained with our algorithm applied to the companion matrix $A$. We denote by $\hat p(x)$ the polynomial having $\tilde \lambda_i$ as exact roots, i.e., $\hat p(x)=\prod(x-\tilde \lambda_i)$. Using the extended precision arithmetic of Matlab we computed the coefficients $\hat p_i$ of $\hat p(x)$ in the monomial basis. We define the backward error in terms of the coefficients of the polynomial as follows
\begin{equation}\label{bep}
\mbox{bw}_{\mbox{err}}(p)=\max_{i}\frac{| p_i-\hat p_i|}{\|x\|_{\infty}},
\end{equation}
$x=(1, p+{n-1}, \ldots, p_0)$.

To confirm experimentally the stability of the algorithm we measured the backward error for matrices with prescribed norms. In particular, in Figure~\ref{fig:bs} for matrices in the class i.e.,  generic unitary-plus-rank-5, and unitary diagonal-plus-rank-5,  we report the results obtained on one thousand matrices of size $50$  with norm ranging from 1  to $10^{13}$, and we plot the absolute backward error $\| P_1^TA P_1-\tilde A^{(m)}\|_{\infty}$ versus $\|A\|_{\infty}$. The dashed lines represent a slope proportional to $\epsilon \|A\|_{\infty}$ and as we can see the plots agree with the results proved in Section~\ref{sec:back}. 

\begin{table}
	\begin{center}
		\begin{tabular}{lrrrrrrr}
			\hline
			$Name$         & n   &   k & degree & $\|\rm{ceig}(A)\|_{\infty}$ & $ \|A\|_{\infty} $ & $\mbox{forw}_{\mbox{err}}$ & $\mbox{back}_{\mbox{err}}$ \\ \hline
			acousticwave1d & 20  &  10 & 2      & 5.96e+01                       & 1.37e+01           & 1.42e-15                   & 1.02e-14                   \\
			bicycle        & 4   &   2 & 2      & 5.70e+02                       & 9.62e+03           & 2.60e-15                   & 8.29e-16                   \\
			cdplayer       & 120 &  60 & 2      & 4.50e+03                       & 2.67e+07           & 5.17e-16                   & 5.85e-15                   \\
			closedloop     & 4   &   2 & 2      & 9.00e+00                       & 3.00e+00           & 8.99e-16                   & 1.42e-15                   \\
			dirac          & 160 &  80 & 2      & 2.11e+03                       & 1.38e+03           & 5.24e-14                   & 1.39e-13                   \\
			hospital       & 48  &  24 & 2      & 4.49e+01                       & 1.11e+04           & 7.84e-13                   & 2.57e-14                   \\
			metalstrip     & 18  &   9 & 2      & 1.71e+02                       & 3.48e+02           & 7.78e-16                   & 2.42e-15                   \\
			omnicam1       & 18  &   9 & 2      & 5.04e+15                       & 1.73e+05           & 4.03e-07                   & 2.60e-15                   \\
			omnicam2       & 30  &  15 & 2      & 4.66e+17                       & 6.22e+07           & 1.16e-02                   & 4.90e-15                   \\
			powerplant     & 24  &   8 & 2      & 1.72e+05                       & 3.73e+07           & 7.13e-08                   & 2.68e-15                   \\
			qep2           & 6   &   3 & 2      & 1.80e+16                       & 4.00e+00           & 3.31e-09                   & 3.65e-16                   \\
			sign1          & 162 &  81 & 2      & 3.29e+09                       & 1.53e+01           & 4.10e-09                   & 5.84e-14                   \\
			sign2          & 162 &  81 & 2      & 9.61e+02                       & 5.63e+01           & 4.27e-13                   & 3.54e-14                   \\
			spring         & 10  &   5 & 2      & 2.33e+00                       & 8.23e+01           & 3.00e-16                   & 1.93e-15                   \\
			wiresaw1       & 20  &  10 & 2      & 1.57e+01                       & 1.42e+03           & 6.00e-14                   & 4.20e-15                   \\ \hline
			butterfly      & 240 &  64 & 4      & 2.97e+01                       & 5.18e+01           & 5.15e-14                   & 1.29e-13                   \\
			orrsommerfeld  & 40  &  10 & 4      & 1.88e+06                       & 9.67e+00           & 1.83e-14                   & 6.35e-15                   \\
			plasmadrift    & 384 & 128 & 3      & 6.64e+04                       & 3.24e+02           & 1.02e-13                   & 4.86e-14                   \\ \hline
		\end{tabular}
		\caption{Results on the NLEVP collection. On the  top part of the table results for quadratic problems with $k=n/2$.  On the bottom matrix polynomials with degree greater than 2.  \label{tab:NLEP}}
	\end{center}
\end{table}
In  Figure~\ref{fig:bsZ}, for matrix polynomials,  we compare backwards stability of the algorithm using implicit $Z$ or   updating explicitly $Z$ at each iteration.  We observe that the absolute backward error behaves as $\varepsilon \|A\|$ when $Z$ is updated at each iterations, while it b ehaves as $\varepsilon \|A\|^2$ when $Z$ is retrieved only at the end of the computations. This  shows in a very clear way  that it is better to  update  the rank-$k$ part at each iteration.  { In order to explain these discrepancies  theoretically we  recall that at the beginning of our error analysis  in the previous section we assume that  the matrix $\hat A$ is upper Hessenberg. However  the actual  matrix  obtained at the end of the Hessenberg reduction  process  only satisfies this requirement up to a backward error of order  $\varepsilon\|A\|$.  The different behavior of the explicit and  the implicit  algorithm  depends  on the propagation of this error. Specifically we can show that in the explicit variant  the error propagates additively  whereas in the implicit counterpart the error  increases  by a factor of order $\|A\|$.  Similar  error bounds have appeared in~\cite{AMRVW17}  where  a backward stable method for eigenvalues and eigenvectors approximation of matrix polynomials is proposed. The algorithm is a variant of Francis's implicitly shifted QR algorithm applied on the companion pencil, and the rank correction is not explicitly computed but it is computed only once at the end of the computation when retrieving the eigenvalues. The authors of~\cite{AMRVW17} proved that on the unscaled pencil $(S, T)$ the computed Schur form is the exact Schur form of a perturbed pencil $(S+\delta_S, T+\delta_T)$, where $\|\delta_S\|\le \varepsilon\|S\|^2$ and $\|\delta_T\|\le \varepsilon\|T\|^2$. Working with the pencil they are able to remove the dependence from the norm by scaling the pencil. In our case it is not possible to scale $A$ without destroying the unitary plus low rank structure, but we prove  that the absolute error is $O(\|A\|)\varepsilon$ keeping $Z$ explicit. In specific cases  as for polynomial rootfinding  where the Hessenberg structure of $\hat A$ is   determined exactly we achieve very good  results  also when keeping $Z$ implicit.}
\begin{table}
\begin{center}
	\begin{tabular}{rrrrrrrr}
		\hline
		n  &  k & degree &  $ \|A\|_{\infty} $&$\|\rm{ceig}(A)\|_{\infty}$  & $\mbox{forw}_{\mbox{err}}$ & $\mbox{back}_{\mbox{err}}$ \\ \hline
		50 &  2 &     25 & 2.46e+01                       & 1.77e+01           & 3.63e-15                   & 1.10e-14                   \\
		50 &  2 &     25 & 1.42e+06                       & 2.85e+01           & 2.65e-12                   & 9.37e-15                   \\
		50 &  5 &     10 & 2.34e+01                       & 2.32e+01           & 2.26e-15                   & 8.11e-15                   \\
		50 &  5 &     10 & 2.23e+06                       & 3.86e+01           & 7.73e-12                   & 8.31e-15                   \\
		50 & 10 &      5 & 2.75e+01                       & 3.24e+01           & 1.78e-15                   & 9.25e-15                   \\ 
		50& 10&        5& 3.16e+06                       &3.90e+01             & 1.08e-11                      & 7.78e-15  \\ \hline
		100& 5 &20 &1.02e+06  &3.43e+01 &   7.05e-13 & 9.88e-15 \\
		200& 5 &40 & 1.96e+06 & 3.73e+01&  2.62e-13 & 1.93e-14 \\
		400& 5 &80& 3.90e+06 & 1.01e+02&1.44e-13 & 3.19e-14 \\
		750& 5 &130& 7.09e+06 & 9.79e+01& 5.59e-13 & 5.30e-14 \\
		1000& 5 &200&  9.57e+07 & 2.60e+04& 8.49e-11 & 7.53e-14 \\
\hline
	\end{tabular}
	\caption{Top: Random polynomials of low degree with different norm sizes. We see that, in agreement with the theoretical results, the relative backward error is not affected by the norm of the matrix. Bottom: Random polynomials with higher degree and moderately high norm. We see that also in the larger example the stability is not  compromised. The figures for the larger tests  are the average over 10 runs.  \label{tab:rand_pol} }
\end{center}
\end{table}
In Table~\ref{tab:sc_pol} we report the backward errors in the scalar  polynomials described in Table~\ref{tab:scal_pol}. We report both the backward error in terms of the matrix coefficients and of the coefficients of the polynomial and we  see that the tests confirm the backward stability of the algorithm. Edelman and Murakami~\cite{EM95}  proved that  the analysis of the backward error in terms of the polynomial coefficients might introduce an additional factor proportional to $\|A\|$ but Table~\ref{tab:sc_pol} revels that we do  better than expected because $\mbox{bw}_{\mbox{err}}(p)=\varepsilon\, O(\|A\|)$, and not $\varepsilon\, O(\|A\|^2)$.
We report also the values of $\mbox{bw}_{\mbox{err}}(p)$  obtained on the same tests by two specialized algorithms for polynomial rootfinding, namely AMVW~\cite{AMVW15} and BEGG~\cite{BEGG} and by ZHSEQR, the LAPACK routine for computing  the eigenvalues of a Hessenberg matrix without any further structure. We obtain better results than those one gets using BEGG method, but 
sometimes we  lose a digit of precision compared to AMVW. We think that this is mostly due  to  differences in shift and in deflation criteria or in the retrieving,  in hight precision,   the coefficients of the polynomial $\hat p(x)$ from the computed roots.
Our method provides a unified framework to treat a larger class of matrices that contains companions and block companions but also  perturbations of CMV shapes, or unitary diagonal plus low rank, and so on. See~\cite{FS03} for some real world applications different from  scalar/matrix polynomials computation. 
\begin{figure}
	\begin{center}		
	\includegraphics[width=1\linewidth]{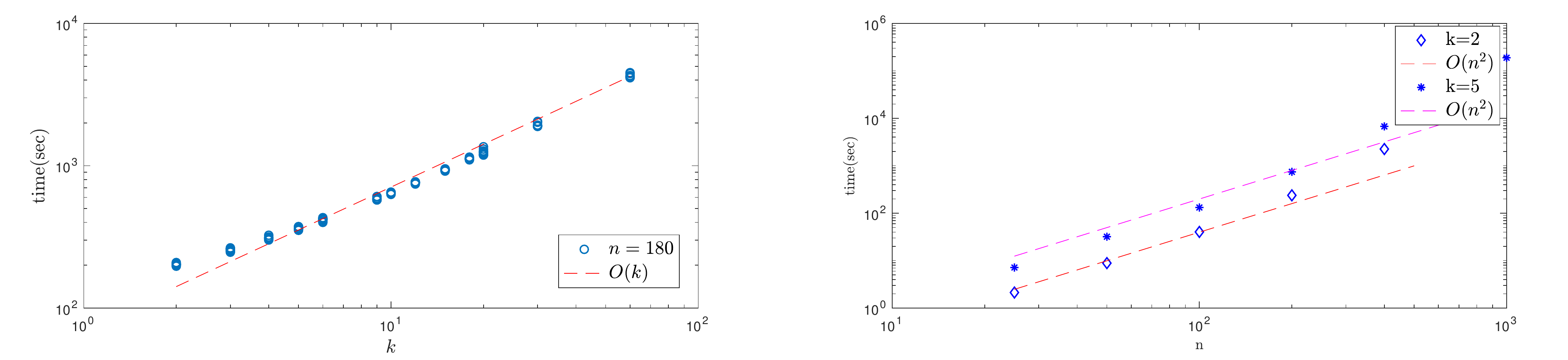}	
\end{center}
	\caption{On the right the double logarithmic plot for random matrices of size 180 that are unitary-plus-rank-$k$ with $k$ ranging from 1 to 60. The reference line shows the linear dependence on $k$. On the right, for $k=2$ and $k=5$ and matrices of size ranging from 25 to 1000. The dashed lines represent the $O(n^2)$ slope.}
\end{figure}
%
%
Table~\ref{tab:NLEP} reports the results obtained for several problems form the NLEVP collection~\cite{NLEVP}, which contains  polynomial eigenvalue problems from real-life applications.  To apply our method we needed to invert the coefficient corresponding to the higher degree of the polynomial so not all the problems of the collection were suitable for our algorithm.  In the collection we find mainly quadratic polynomial and a few examples of polynomial of degree $\ge 3$.   Table~\ref{tab:NLEP}  reportsthe degree $d$ of the polynomials, the size $k$ of the  coefficients, and $n=kd$ that is the size of the matrix of the linearization. We cannot compare directly with the method proposed in~\cite{AMRVW17} since the authors of that paper work on a pencil $(S, T)$ and then were able to scale each matrix of the pencil by a factor $\alpha=\max(\|S\|, \|T\| \}$  to remove the dependence of the error on the norm. In principle the algorithm BEGG~\cite{BEGG}, based on quasiseparable representation as well as other methods based on Givens weight and Givens vector representation, can be extended to deal with these matrices but with a cost of order at least $O(n^2k^3)$ which is not competitive for $k=O(n)$.
\begin{table}
	\begin{center}
		\begin{tabular}{rrrrrr}
			\hline
			n   & k  & $ \|A\|_{\infty} $ & $\|\rm{ceig}(A)\|_{\infty}$ & $\mbox{forw}_{\mbox{err}}$ & $\mbox{back}_{\mbox{err}}$ \\ \hline
			50  & 1  & 7.14e+00           & 4.19e+00                       & 7.33e-15                   & 9.45e-15                   \\
			50  & 1  & 9.70e+04           & 3.52e+01                       & 1.70e-16                   & 3.10e-15                   \\ \hline
			50  & 2  & 7.19e+00           & 3.78e+00                       & 7.51e-15                   & 9.92e-15                   \\
			50  & 2  & 9.83e+04           & 3.27e+01                       & 1.97e-16                   & 2.91e-15                   \\ \hline
			50  & 25 & 8.27e+00           & 1.00e+15                       & 5.36e-15                   & 1.10e-14                   \\
			50  & 25 & 9.98e+04           & 7.85e+14                       & 1.95e-16                   & 3.15e-15                   \\ \hline
			100 & 1  & 1.00e+01           & 1.37e+01                       & 1.46e-14                   & 1.79e-14                   \\
			100 & 1  & 9.85e+04           & 2.04e+02                       & 2.46e-16                   & 4.86e-15                   \\ \hline
			100 & 2  & 1.01e+01           & 1.97e+01                       & 1.52e-14                   & 1.89e-14                   \\
			100 & 2  & 9.91e+04           & 1.80e+02                       & 2.60e-16                   & 4.78e-15                   \\ \hline
			100 & 25 & 1.10e+01           & 2.18e+07                       & 1.30e-14                   & 2.07e-14                   \\
			100 & 25 & 9.99e+04           & 1.79e+06                       & 3.30e-16                   & 5.21e-15                   \\ \hline
		\end{tabular}
		\caption{Unitary plus low rank random matrices, with different sizes, rank of the correction and norm of the matrix. For each $n$ and $k$ we tested two cases $\|A\|_{\infty}=O(1)$ and $\|A\|_{\infty}=O(10^4)$. Each result reported is the average over 50 random tests. \label{tab:unit}  }
	\end{center}
\end{table}
The results of our algorithm fore higher degree random  matrix polynomials are reported in Table~\ref{tab:rand_pol} where  also the forward and backward errors for different values of the norm of the coefficients of the polynomials are shown. Each line refers to the average value over 50 tests on generalized companion matrices associated to matrix polynomials of size $k$ and degree $d=n/k$. For each pair $(k,d)$ we performed experiments varying the norm of the resulting generalized companion matrix. 
We see that as expected,  for matrices with larger norm, we may have a loss of accuracy in the computed solutions.


\begin{table}
	\centering
	\begin{tabular}{rrrrrr}
		\hline
		$n$ & $k$ & $ \|A\|_{\infty} $ & $\|\rm{ceig}(A)\|_{\infty}$ & $\mbox{forw}_{\mbox{err}}$ & $\mbox{back}_{\mbox{err}}$ \\ \hline
		50  & 1   & 3.46e+01           & 2.25e+00                       & 2.66e-16 & 2.78e-15   \\
		50  & 1   & 3.34e+06           & 2.25e+00                       & 3.11e-17 & 2.32e-15   \\ \hline
		50  & 2   & 5.96e+01           & 1.40e+01                       & 1.57e-16 & 2.78e-15   \\
		50  & 2   & 5.86e+06           & 2.18e+01                       & 8.83e-14 & 2.63e-15   \\ \hline
		50  & 25  & 6.37e+02           & 5.59e+01                       & 7.22e-17 & 2.58e-15   \\
		50  & 25  & 6.36e+07           & 9.34e+01                       & 8.63e-13 & 2.21e-15   \\ \hline
	\end{tabular}
	\caption{Unitary diagonal plus low rank random matrices, with different rank of the correction and norm of the matrix. For each $n$ and $k$ we tested two cases $\|A\|_{\infty}=O(1)$ and $\|A\|_{\infty}=O(10^4)$. Each result reported is the average over 50 random tests. \label{tab:diag}}
\end{table}

\begin{table}
	\begin{center}
		\begin{tabular}{rrrrrrr}
			\hline
			\#                  & n  & k  & $\|\rm{ceig}(A)\|_{\infty}$ & $ \|A\|_{\infty} $ & $\mbox{forw}_{\mbox{err}}$ & $\mbox{back}_{\mbox{err}}$ \\ \hline
			1                   & 20 & 10 & 1.87e+21                       & 2.30e+19           & 7.17e-01                   & 2.88e-15                   \\
			2                   & 20 & 10 & 2.08e+04                       & 9.48e+02           & 2.80e-01                   & 3.68e-15                   \\
			3                   & 20 & 10 & 2.45e+15                       & 1.52e+01           & 2.45e-01                   & 4.39e-15                   \\
			4                   & 20 & 10 & 3.19e+15                       & 2.67e+14           & 7.70e-02                   & 5.71e-15                   \\
			5                   & 20 & 10 & 4.72e+20                       & 1.66e+18           & 7.02e-02                   & 2.17e-15                   \\
			6                   & 30 & 15 & 1.11e+18                       & 1.05e+02           & 3.08e-01                   & 1.31e-14                   \\
			7                   & 20 & 10 & 4.90e+00                       & 5.18e+00           & 1.97e-15                   & 7.31e-15                   \\
			8                  & 20 & 10 & 2.21e+15                       & 7.10e+04           & 7.03e-11                   & 5.16e-15                   \\
			9                  & 40 & 3  & 8.06e+15                       & 4.04e+00           & 1.28e-03                   & 1.02e-14                   \\
			10                  & 30 & 15 & 3.83e+01                       & 1.06e+01           & 3.51e-15                   & 1.08e-14                   \\
			11                  & 29 & 15 & 2.37e+00                       & 4.00e+00           & 3.03e-15                   & 1.88e-14                   \\ \hline
		\end{tabular}
	\end{center}
	\caption{Results on Fiedler pentadiagonal matrices~\cite{CM2} associated to scalar polynomials. As proved in~\cite{DPRV19,DPRV_sub} the rank-correction  for dense polynomials is in general $k=\lceil n/2\rceil$ but it can be lower in the case the polynomial is sparse. \label{tab:fiedler}  }
\end{table}

Tables~\ref{tab:unit},~\ref{tab:diag}~\ref{tab:fiedler}, contain the results  for random unitary-plus-low-rank matrices, for perturbed unitary diagonal matrices and for Fiedler pentadiagonal matrices. In all the cases, and independently on the matrix norm, we get very good results for the backward stability. Note that  when the actual eigenvalues are unknown the results for the forward error show that the computed approximations  agree with those returned by Matlab {\tt eig} command.

\section{Conclusions}\label{sec:conclusions}

In this paper we have presented a novel algorithm for eigenvalue computation of unitary-plus-low-rank Hessenberg matrices.
The algorithm is computationally efficient with  respect  to both  the size of the matrix and   the size of the perturbation.
Further, the algorithm is shown  to be  backward stable.  At the core of the algorithm  is
a compressed data-sparse representation of the matrix as a product of
three factors. The outermost factors are unitary generalized  Hessenberg matrices whereas the factor in the middle is a unitary upper Hessenberg matrix corrected by a low rank perturbation located in the first rows. In particular cases it is possible to obtain the data-sparse Hessenberg form with cost $O(n^2k)$ flops instead of the customary $O(n^3)$ flops.  
It is shown that deflation and convergence of the QR iteration can be checked directly from the representation by
greatly simplifying the resulting fast scheme.  Future work is concerned with
the analysis  of efficient procedures   for computing the factored representation of the initial matrix as well as the design of
a fast QZ iteration for matrix pencils.


\end{document}